\newif\ifcomm
\newif\iflong
\newtheorem{thm}{Theorem}
\newtheorem{lem}[thm]{Lemma}
\newtheorem{lemma}[thm]{Lemma}
\newtheorem{defn}{Definition}
\newtheorem{rem}[thm]{Remark}
\newcounter{assumption}%[section]
\renewcommand{\theassumption}{A\arabic{assumption}}
\newenvironment{ass}[1][]{\begin{trivlist}\item[] \refstepcounter{assumption}%
 {\bf Assumption\ \theassumption\ {\em (#1)} } }{%\par\nobreak\noindent\sl\ignorespaces}{%
 \ifvmode\smallskip\fi\end{trivlist}}
\newcommand{\norm}[1]{\left\Vert#1\right\Vert}
\newcommand{\abs}[1]{\left\vert#1\right\vert}
\newcommand{\Real}{\mathbb R}                        % Real numbers
\newcommand{\EE}[1]{{\mathbb E}\left[#1\right]}      % Expectations
\newcommand{\one}[1]{{\mathbb I}_{\{#1\}}}           % Characteristic function
\newcommand{\ra}{\rightarrow}
\newcommand{\argmin}{\mathop{\rm argmin}}
\newcommand{\eqdef}{\stackrel{\mbox{\rm\tiny def}}{=}}
\newcommand{\beq}{\begin{equation}}
\newcommand{\eeq}{\end{equation}}
\newcommand{\beqa}{\begin{eqnarray}}
\newcommand{\eeqa}{\end{eqnarray}}
\newcommand{\beqan}{\begin{eqnarray*}}
\newcommand{\eeqan}{\end{eqnarray*}}
\newcommand{\ben}{\begin{eqnarray*}}
\newcommand{\een}{\end{eqnarray*}}
\newcommand{\bea}{\begin{align*}}
\newcommand{\eea}{\end{align*}}
   \newcommand\comm[1]{\textcolor{blue}{ #1}}
   \newcommand{\mtodo}[2]{\todo{{\bf #1}: #2}} % To add comments into the text; the first argument is "who", the second is "what"
   \def\here#1{{\bf $\langle\langle$#1$\rangle\rangle$}}
   \newcommand\comm[1]{}
   \newcommand{\mtodo}[2]{}
   \def\here#1{}
\newcommand{\cZ}{{\cal Z}}
\newcommand{\cX}{{\cal X}}
\newcommand{\cA}{{\cal A}}
\renewcommand{\phi}{\varphi}
\newcommand{\cS}{{\cal S}}
\newcommand{\cN}{{\cal N}}
\title{Linear Programming for Large-Scale Markov Decision Problems}
\author{
Yasin Abbasi-Yadkori\\
Queensland University of Technology\\
\texttt{yasin.abbasiyadkori@qut.edu.au} \\
\and
Peter L. Bartlett \\
UC Berkeley and QUT \\
\texttt{bartlett@eecs.berkeley.edu} \\
\and
Alan Malek \\
UC Berkeley \\
\texttt{malek@eecs.berkeley.edu} \\
}
\begin{document}

\maketitle

\begin{abstract}
We consider the problem of controlling a Markov decision
process (MDP) with a large state space, so as to minimize average cost.
Since it is intractable to compete with the optimal policy for large
scale problems, we pursue the more modest goal of competing with a
low-dimensional family of policies. We use the dual linear programming
formulation of the MDP average cost problem, in which the variable is
a stationary distribution over state-action pairs, and we consider a
neighborhood of a low-dimensional subset of the set of stationary
distributions (defined in terms of state-action features) as
the comparison class.
We propose two techniques, one based on stochastic convex optimization,
and one based on constraint sampling. In both cases, we give bounds
that show that the performance of our algorithms approaches the best
achievable by any policy in the comparison class. Most importantly,
these results depend on the size of the comparison class, but not
on the size of the state space.
Preliminary experiments
show the effectiveness of the proposed algorithms in a queuing
application.
\end{abstract}

%\begin{keywords}
%Online Learning, Markov Decision Processes, Regret Analysis 
%\end{keywords}

\section{Introduction}

We study the average loss Markov decision process problem. The
problem is well-understood when the state and action spaces are small
\citep{Bertsekas-2007}. Dynamic programming (DP) algorithms, such as
value iteration~\citep{Bellman-1957} and policy
iteration~\citep{Howard-1960}, are standard techniques to compute the
optimal policy. In large state space problems, exact DP is not
feasible as the computational complexity scales quadratically with the
number of states.

A popular approach to large-scale problems is to restrict the search
to the linear span of a small number of features. The objective is to
compete with the best solution within this comparison class. Two popular
methods are Approximate Dynamic Programming (ADP) and Approximate Linear
Programming (ALP). This paper focuses on ALP.
For a survey on theoretical results for ADP
see \citep{Bertsekas-Tsitsiklis-1996, Sutton-Barto-1998},
\citep[Vol. 2, Chapter 6]{Bertsekas-2007}, and more recent papers
\citep{Sutton-Szepesvari-Maei-2009,
Sutton-Maei-Precup-Bhatnagar-Silver-Szepesvari-Wiewiora-2009,
Maei-Szepesvari-Bhatnagar-Precup-Silver-Sutton-2009,
Maei-Szepesvari-Bhatnagar-Sutton-2010}. 

Our aim is to develop methods that find policies with
performance guaranteed to be close to the best in the
comparison class but with computational complexity that
does not grow with the size of the state space.
All prior work on ALP either scales badly or requires
access to samples from a distribution that depends on the optimal
policy.

This paper proposes a new algorithm to solve the Approximate Linear Programming problem that is computationally efficient and does not require knowledge of the optimal policy. In particular, we introduce new proof techniques and tools for average cost MDP problems and use these techniques to derive a reduction to stochastic convex optimization with accompanying error bounds. We also propose a constraint sampling technique and obtain performance guarantees under an additional assumption on the choice of features.

\subsection{Notation}
Let $X$ and $A$ be positive integers. Let $\cX= \{1,2,\ldots,X\}$ and $\cA = \{1,2,\ldots, A\}$ be state and action spaces, respectively. 
%Let $\cX$ be a finite state space with cardinality $X$ and $\cA$ be a finite action space with cardinality $A$. 
Let $\Delta_S$ denote probability distributions over set $S$. A policy $\pi$ is a map from the state space to $\Delta_{\cA}$: $\pi:\cX\ra\Delta_{\cA}$. We use $\pi(a|x)$ to denote the probability of choosing action $a$ in state $x$ under policy $\pi$. A transition probability kernel (or transition kernel)  $P:\cX \times \cA \ra \Delta_{\cX}$ maps from the direct product of the state and action spaces to $\Delta_{\cX}$. Let $P^{\pi}$ denote the probability transition kernel under policy $\pi$. A loss function is a bounded real-valued function over state and action spaces, $\ell:\cX\times \cA \ra [0,1]$.  Let $M_{i,:}$ and $M_{:,j}$ denote $i$th row and $j$th column of matrix $M$ respectively. Let $\norm{v}_{1,c} = \sum_{i} c_i\abs{v_i}$ and $\norm{v}_{\infty, c} = \max_i c_i \abs{v_i}$ for a positive vector $c$. We use $\mathbf 1$ and $\mathbf 0$ to denote vectors with all elements equal to one and zero, respectively. We use $\wedge$ and $\vee$ to denote the minimum and the maximum, respectively. For vectors $v$ and $w$, $v \le w$ means element-wise inequality, i.e. $v_i \le w_i$ for all $i$.

\subsection{Linear Programming Approach to Markov Decision Problems}

Under certain assumptions, there exist a scalar $\lambda_*$ and a vector $h_*\in \Real^X$ that satisfy the Bellman optimality equations for average loss problems, 
\[
\lambda_* + h_*(x) = \min_{a \in \cA} \left[ \ell(x,a) + \sum_{x'\in\cX} P_{(x,a),x'} h_*(x') \right] \; .
\] 
The scalar $\lambda_*$ is called the optimal average loss, while the vector $h_*$ is called a differential value function. The action that minimizes the right-hand side of the above equation is the optimal action in state $x$ and is denoted by $a_*(x)$. The optimal policy is defined by $\pi_*(a_*(x) | x)=1$. Given $\ell$ and $P$, the objective of the \textit{planner} is to compute the optimal action in all states, or equivalently, to find the optimal policy.

The MDP problem can also be stated in the LP formulation~\citep{Manne-1960},
\begin{align}
\label{eq:primal}
&\max_{\lambda, h} \lambda\,, \\
\notag
&\mbox{s.t.}\quad B(\lambda \mathbf 1 + h) \le \ell + P h \,,
\end{align}
where $B\in \{0,1\}^{XA\times X}$ is a binary matrix such that the $i$th column has $A$ ones in rows $1+(i-1)A$ to $iA$.  Let $v_{\pi}$ be the stationary distribution under policy $\pi$ and let $\mu_{\pi}(x,a) = v_{\pi}(x) \pi(a|x)$. We can write
\begin{align*}
\pi_* &= \argmin_{\pi } \sum_{x\in \cX} v_{\pi}(x) \sum_{a\in\cA} \pi(a|x) \ell(x,a) \\ 
&= \argmin_{\pi}\sum_{(x,a)\in \cX\times \cA} \mu_{\pi}(x,a) \ell(x,a) = \argmin_{\pi }\mu_\pi^\top \ell \; .
\end{align*}
In fact, the dual of LP~\eqref{eq:primal} has the form of
\begin{align}
\label{eq:dual}
&\min_{\mu\in \Real^{X A}} \mu^\top \ell\,, \\
\notag
&\mbox{s.t.}\quad \mu^\top \mathbf 1 = 1,\, \mu \geq \mathbf 0,\, \mu^\top (P - B) = \mathbf 0 \; .
\end{align}
The objective function, $\mu^\top \ell$, is the average loss under
stationary distribution $\mu$. The first two constraints ensure that
$\mu$ is a probability distribution over state-action space, while the
last constraint ensures that $\mu$ is a stationary distribution. Given
a solution $\mu$, we can obtain a policy via
$\pi(a|x) = \mu(x,a)/\sum_{a'\in\cA} \mu(x,a')$.

%Consider a space of policies. Find the best among them... Feasible set might be empty. 

\subsection{Approximations for Large State Spaces}
\label{sec:ALP}

The LP formulations \eqref{eq:primal} and \eqref{eq:dual} are not
practical for large scale problems as the number of variables and
constraints grows linearly with the number of states.
\citet{Schweitzer-Seidmann-1985} propose approximate linear
programming (ALP) formulations. These methods were later improved by
\citet{DeFarias-VanRoy-2003, deFarias-VanRoy-NIPS-2003,
Hauskrecht-Kveton-2003, Guestrin-Hauskrecht-Kveton-2004,
Petrik-Zilberstein-2009, Desai-Farias-Moallemi-2012}. As noted by
\citet{Desai-Farias-Moallemi-2012}, the prior work on ALP either
requires access to samples from a distribution that depends on optimal
policy or assumes the ability to solve an LP with as many constraints
as states. (See Section~\ref{sec:related-work} for a more detailed discussion.) 
Our objective is to design algorithms for very large MDPs that do not
require knowledge of the optimal policy.
%and show performance bounds without imposing such strong conditions. 
%However, even under these idealized assumptions, the existing theoretical results have several limitations.

In contrast to the aforementioned methods, which solve the primal ALPs (with value
functions as variables), we work with the dual form \eqref{eq:dual} (with stationary distributions as variables). Analogous to ALPs, we control the complexity by limiting our search to a linear subspace defined by a small number of {\em features}. Let $d$ be the number of features and $\Phi$ be a $(X A)\times d$ matrix with features as column vectors. By adding the constraint $\mu = \Phi \theta$, we get
\begin{align*}
%\label{eq:dual-apprx}
&\min_{\theta} \theta^\top \Phi^\top \ell\,, \\
\notag
&\mbox{s.t.}\quad \theta^\top \Phi^\top \mathbf 1 = 1,\, \Phi \theta \geq \mathbf 0,\, \theta^\top \Phi^\top (P - B) = \mathbf 0 \; .
\end{align*}
If a stationary distribution $\mu_0$ is known, it can be added to the
linear span to get the ALP
\begin{align}
\label{eq:dual-apprx}
&\min_{\theta} (\mu_0 + \Phi \theta)^\top  \ell\,, \\
\notag
&\mbox{s.t.}\quad (\mu_0+\Phi\theta)^\top  \mathbf 1 = 1,\, \mu_0+ \Phi \theta \geq \mathbf 0,\, (\mu_0+\Phi\theta)^\top (P - B) = \mathbf 0 \; .
\end{align}
%%% Huh? \theta=0 is always feasible!???!
% It is likely that the feasible set of the above ALP is empty, as the span of features might contain no stationary distributions.
Although $\mu_0+\Phi\theta$ might not be a stationary distribution, it still defines a policy\footnote{We use the notation $[v]_{-}=v \wedge 0$ and $[v]_+ = v \vee 0$.} 
\begin{equation}
\label{eqn:policy-definition}
\pi_{\theta}(a|x) = \frac{[\mu_0(x,a) + \Phi_{(x,a),:} \theta]_{+}}{\sum_{a'} [\mu_0(x,a') + \Phi_{(x,a'),:} \theta]_{+}} \,,
\end{equation}
We denote the stationary distribution of this policy $\mu_{\theta}$ which is only equal to $\mu_0+\Phi\theta$ if $\theta$ is in the feasible set. %To emphasize, we prove bounds for the average loss of policy $\pi_\theta$. 
%Our objective is to find a $\theta$ in a bounded set such that $\mu_\theta^\top \ell$ is small. Computing approximate solutions for this problem is the subject of this paper. 

\subsection{Problem definition}
With the above notation, we can now be explicit about the problem we are solving.
\begin{defn}[Efficient Large-Scale Dual ALP]
\label{defn:ELALP}
For an MDP specified by $\ell$ and $P$, the efficient
large-scale dual ALP problem is to produce parameters $\widehat\theta$ such
\begin{equation}\label{eqn:ELALP}
  \mu_{\widehat\theta}^\top\ell
  \leq \min\left\{ \mu_{\theta}^\top\ell : \text{$\theta$ feasible
  for \eqref{eq:dual-apprx}}\right\} + O(\epsilon)
\end{equation}
in time polynomial in $d$ and $1/\epsilon$. The model of computation
allows access to arbitrary entries of $\Phi$, $\ell$,
$P$, $\mu_0$, $P^\top\Phi$, and $\textbf{1}^\top\Phi$ in unit time.
\end{defn}
Importantly, the computational complexity cannot scale with $X$ and we do not assume
any knowledge of the optimal policy. In fact, as we shall see, we
solve a harder problem, which we define as follows.
\begin{defn}[Expanded Efficient Large-Scale Dual ALP]
\label{defn:E.ELALP}
Let $V:\Re^d\to\Re_+$ be some ``violation function'' that
represents how far $\mu_0+\Phi\theta$ is from a valid stationary
distribution, satisfying $V(\theta)=0$ if $\theta$ is a feasible point
for the ALP~\eqref{eq:dual-apprx}. The expanded efficient large-scale
dual ALP
problem is to produce parameters $\widehat\theta$ such that
\begin{equation}
\label{eqn:E-ELALP}
\mu_{\widehat\theta}^\top\ell \leq \min\left\{\mu_{\theta}^\top\ell
+\frac{1}{\epsilon}V(\theta) : \theta\in\Re^d \right\}+O(\epsilon),
\end{equation}
in time polynomial in $d$ and $1/\epsilon$, under the same model of
computation as in Definition~\ref{defn:ELALP}.
\end{defn}
Note that the expanded problem is strictly more general as guarantee
\eqref{eqn:E-ELALP} implies guarantee \eqref{eqn:ELALP}. Also, many
feature vectors $\Phi$ may not admit any feasible points. In this
case, the dual ALP problem is trivial, but the expanded problem is
still meaningful.

Having access to arbitrary entries of the quantities in
Definition~\ref{defn:ELALP} arises naturally in many situations.
%Often, analytic forms of single entries of $\ell$ and $P$ are easy to compute. 
In many cases, entries of $P^\top\Phi$ are easy to compute. 
For example, suppose that for any state $x'$ there are a small number of state-action pairs $(x,a)$ such that $P(x'|x,a)>0$. Consider Tetris; although the number of board configurations is large, each state has a small number of possible neighbors. Dynamics specified by graphical models with small connectivity also satisfy this constraint. Computing entries of $P^\top\Phi$ is also feasible given reasonable features. If a feature $\phi_i$ is a stationary distribution, then $P^\top\phi_i=B^\top \phi_i$. Otherwise, it is our prerogative to design sparse feature vectors, hence making the multiplication easy. We shall see an example of this setting later.

%Our algorithms are stochastic methods that sample from the state-action space to deal with the constraints. Let $q_1$ be a distribution over the state-action space and $q_2$ be a distribution over the state space that satisfy certain conditions (to be specified later). Throughout this paper, we assume we have access to an efficient method that can sample from $q_1$ and $q_2$. %a distribution $q_1$ over the state-action space and a distribution $q_2$ over the state space. Define

\subsection{Our Contributions}

In this paper, we introduce an algorithm that solves the expanded efficient large-scale dual ALP problem under a (standard) assumption that any policy converges quickly to its stationary distribution.

Our algorithm take as input a constant $S$ and an error tolerance
$\epsilon$, and has access to 
%entries of an optional stationary distribution
%$\mu_0$, a feature vector $\Phi$, a loss vector $\ell$, a transition matrix $P$, and 
the various products listed in
Definition~\ref{defn:ELALP}.
Define $\Theta = \{ \theta \ : \ \theta^\top
\Phi^\top \mathbf 1 = 1-\mu_0^\top \mathbf 1,\, \norm{\theta} \le S
\}$.
If no stationary distribution is known, we can simply choose
$\mu_0=\mathbf 0$. The algorithm is based on stochastic convex
optimization. We prove that for any $\delta\in (0,1)$, after
$O(1/\epsilon^4)$ steps of gradient descent, the algorithm finds a
vector $\widehat \theta \in \Theta$ such that, with probability at
least $1-\delta$,
\begin{align*}
\mu_{\widehat\theta}^\top \ell \le& \mu_\theta^\top \ell + \frac{1}{\epsilon}  \norm{[\mu_0 + \Phi\theta]_{-}}_1 + \frac{1}{\epsilon} \norm{(P - B)^\top (\mu_0 + \Phi\theta)}_1 + O(\epsilon \log(1/\delta)) \;
\end{align*}
holds for all $\theta\in\Theta$; i.e., we solve the expanded problem for $V(\theta)$ equal to the $L_1$ error of the violation.
The second and third terms are zero for feasible points (points in the
intersection of the feasible set of LP~\eqref{eq:dual} and the span of
the features).
% When the feasible set is empty,
For points outside the feasible set, these terms measure the
extent of constraint violations for the vector $\mu_0 + \Phi\theta$, which
indicate how well stationary distributions can be represented by the
chosen features.
% Thus, we have a performance bound even if the
% feasible set is empty, which is another way the expanded dual ALP
% problem is more general than the regular version.

%The second assumption is satisfied when, for example, any state has a small number of  neighboring states (to be made precise later). The third assumption is satisfied if, for example, features are nearly uniform, or are all exponential functions. 

The above performance bound scales with $1/\epsilon$ that can be large when the feasible set is empty and $\epsilon$ is very small. We propose a second approach and show that this dependence can be eliminated if we have extra information about the MDP. Our second approach is based on the constraint sampling method that is already applied to large-scale linear programming and MDP problems \citep{DeFarias-VanRoy-2004, Calafiore-Campi-2005, Campi-Garatti-2008}. We obtain performance bounds, but under the condition that a suitable function that controls the size of constraint violations is available. Our proof technique is different from previous work and gives stronger performance bounds. 

Our constraint sampling method takes two extra inputs: functions $v_1$ and $v_2$ that specify the amount of constraint violations that we tolerate at each state-action pair.  The algorithm samples $O(\frac{d}{\epsilon} \log\frac{1}{\epsilon})$ constraints and solves the sampled LP problem. Let $\widetilde \theta$ denote the solution of the sampled ALP and $\theta_{*}$ denote the solution of the full ALP subject to constraints $v_1$ and $v_2$. We prove that with high probability,
\[
\ell^\top \mu_{\widetilde \theta} \le \ell^\top \mu_{\theta_*} + O(\epsilon + \norm{v_1}_1 + \norm{v_2}_1) \; .
\]

%To compare the above methods, consider the case when the feasible set is not empty. Then the stochastic optimization method finds an $\epsilon$-optimal solution after $O(d/\epsilon^4)$ computation. We get a similar complexity bound for a constraint sampling method that uses an Interior Point method to solve the sampled LP~\citep{Potra-Wright-2000}. There is however no $1/\epsilon$ term in the performance bound of the constraint sampling method, but the algorithm requires appropriate constraint violation functions $v_1$ and $v_2$. 

%In our algorithm, we estimate the subgradient by sampling constraints of the LP. A natural question to ask is if we can first sample constraints then exactly solve the resulting LP. Analysis for such an algorithm is presented in Section~\ref{sec:const-sampling}. However the analysis requires stronger conditions on the choice of feature vectors. %and does not solve the expanded version of the efficient large-state ALP problem.

\section{Related Work}
\label{sec:related-work}

\citet{DeFarias-VanRoy-2003} study the \textit{discounted} version of the primal form \eqref{eq:primal}. Let $c\in \Real^X$ be a vector with positive components and $\gamma\in (0,1)$ be a discount factor. Let $L:\Real^X \ra \Real^X$ be the Bellman operator defined by $(L J)(x) = \min_{a\in\cA} (\ell(x,a) + \gamma \sum_{x'\in\cX} P_{(x,a),x'} J(x') )$ for $x\in\cX$. 
Let $\Psi\in\Real^{X \times d}$ be a feature matrix. The exact and approximate LP problems are as follows:
\begin{align*}
&\max_{J\in \Real^X} c^\top J\,, &\max_{w\in\Real^d}\ & c^\top \Psi w\,,  \\
&\mbox{s.t.}\quad L J \ge J\,, &\mbox{s.t.}\quad & L \Psi w \ge \Psi w \; .
\end{align*}
which can also be written as
\begin{align}
\label{eq:ALP-primal-1}
&\max_{J\in \Real^X} c^\top J\,, &\max_{w\in\Real^d}\ & c^\top \Psi w\,,  \\
\notag
&\mbox{s.t.}\quad \forall (x,a),\, \ell(x,a) + \gamma P_{(x,a),:} J \ge J(x)\,, &\mbox{s.t.}\quad & \forall (x,a),\, \ell(x,a) + \gamma P_{(x,a),:} \Psi w \ge (\Psi w)(x) \; .
\end{align}
The optimization problem on the RHS is an approximate LP with the choice of $J = \Psi w$. Let $J_\pi (x) = \EE{\sum_{t=0}^\infty \gamma^t \ell(x_t, \pi(x_t)) | x_0 = x}$ be value of policy $\pi$, $J_*$ be the solution of LHS, and $\pi_J(x) = \argmin_{a\in \cA} (\ell(x,a) + \gamma P_{(x,a),:} J )$ be the greedy policy with respect to $J$. Let $\nu\in \Delta_\cX$ be a probability distribution and define $\mu_{\pi, \nu} = (1-\gamma) \nu^\top (I - \gamma P^\pi)^{-1}$. 
\citet{DeFarias-VanRoy-2003} prove that for any $J$ satisfying the constraints of the LHS of \eqref{eq:ALP-primal-1}, 
\beq
\label{eq:value-greedy-policy}
\norm{J_{\pi_J} - J_*}_{1,\nu} \le \frac{1}{1-\gamma} \norm{J - J_*}_{1, \mu_{\pi_J, \nu}} \; .
\eeq
Define $\beta_u = \gamma \max_{x,a} \sum_{x'} P_{(x,a), x'} u(x')/u(x)$. Let $U=\{u\in\Real^X : u\ge \mathbf{1}, u\in \mbox{span}(\Psi), \beta_u < 1 \}$. 
%Define operator $H$ by $(H V)(x) = \max_a \sum_{y} P_{x,y}^a V(y)$ and also $\beta_V = \max_x \gamma (H V)(x)/V(x)$. 
Let $w_*$ be the solution of ALP. \citet{DeFarias-VanRoy-2003} show that for any $u\in U$,
\beq
\label{eq:ALP-apprx}
\norm{J_* - \Psi w_*}_{1,c} \le \frac{2 c^\top u}{1-\beta_{u}} \min_{w} \norm{J_* - \Psi w}_{\infty, 1/u} \; .
\eeq
This result has a number of limitations. First, solving ALP can be computationally expensive as the number of constraints is large. Second, it assumes that the feasible set of ALP is non-empty.  
%Third, it is required that $\beta_{u} < 1$. 
Finally, Inequality~\eqref{eq:value-greedy-policy} implies that $c=\mu_{\pi_{\Psi w_*}, \nu}$ is an appropriate choice to obtain performance bounds. However, $w_*$ itself is function of $c$ and is not known before solving ALP. %One approach is to solve ALP iteratively, using $c=\mu_{\pi_{\Psi w_*}, \nu}$ from last iteration. 

\citet{DeFarias-VanRoy-2004} propose a computationally efficient algorithm that is based on a constraint sampling technique. The idea is to sample a relatively small number of constraints and solve the resulting LP. 
Let $\cN\subset \Real^d$ be a known set that contains $w_*$ (solution of ALP). Let $\mu_{\pi, c}^V (x) = \mu_{\pi, c}(x) V(x)/(\mu_{\pi, c}^\top V)$ and define the distribution $\rho_{\pi, c}^V(x,a) = \mu_{\pi, c}^V (x) / A$. Let $\delta\in (0,1)$ and $\epsilon\in (0,1)$. Let $\overline\beta_u = \gamma \max_{x} \sum_{x'} P_{(x,\pi_*(x)), x'} u(x')/u(x)$ and 
\[
D = \frac{(1+\overline\beta_V) \mu_{\pi_*, c}^\top V}{2 c^\top J_*} \sup_{w\in \cN} \norm{J_* - \Psi w}_{\infty, 1/V}\,, \qquad m \ge \frac{16 A D}{(1-\gamma)\epsilon} \left( d \log \frac{48 A D}{(1-\gamma) \epsilon} + \log\frac{2}{\delta} \right) \; .
\]
Let $\cS$ be a set of $m$ random state-action pairs sampled under $\rho_{\pi_*, c}^V$. Let $\widehat w$ be a solution of the following sampled LP:
\begin{align*}
&\max_{w\in\Real^d}\  c^\top \Psi w\,,  \\
&\mbox{s.t.}\quad  w\in \cN,\, \forall (x,a)\in \cS,\, \ell(x,a) + \gamma P_{(x,a),:} \Psi w \ge (\Psi w)(x) \; .
\end{align*}
\citet{DeFarias-VanRoy-2004} prove that with probability at least $1-\delta$, we have
\[
\norm{J_* - \Psi \widehat w}_{1,c} \le \norm{J_* - \Psi  w_*}_{1,c} + \epsilon \norm{J_*}_{1,c} \; .
\]
This result has a number of limitations. First, vector $\mu_{\pi_*, c}$ (that is used in the definition of $D$) depends on the optimal policy, but an optimal policy is what we want to compute in the first place. %Second, it assumes that the ALP has a solution (see Assumption 2.1 in their paper). However, for a given set of features, there might be no feasible solution. 
Second, we can no longer use Inequality~\eqref{eq:value-greedy-policy} to obtain a performance bound (a bound on $\norm{J_{\pi_{\Psi \widehat w}} - J_*}_{1,c}$), as $\Psi \widehat w$ does not necessarily satisfy all constraints of ALP. 

\citet{Desai-Farias-Moallemi-2012} study a smoothed version of ALP, in which slack variables are introduced that allow for some violation of the constraints. Let $D'$ be a violation budget. The smoothed ALP (SALP) has the form of
\begin{align*}
&\max_{w,s} c^\top \Psi w\,, &\max_{w,s}&\, c^\top \Psi w - \frac{2 \mu_{\pi_*, c}^\top s}{1-\gamma}\,,  \\
&\mbox{s.t.}\quad \Psi w \le L \Psi w + s,\, \mu_{\pi_*, c}^\top s\le D',\, s\ge {\mathbf 0},\, &\mbox{s.t.}\quad & \Psi w \le L \Psi w + s,\, s\ge {\mathbf 0} \; .
\end{align*}
The ALP on RHS is equivalent to LHS with a specific choice of $D'$. Let $\overline{U}=\{u\in\Real^X\ : \ u\ge \mathbf{1} \}$ be a set of weight vectors. \citet{Desai-Farias-Moallemi-2012} prove that if $w_*$ is a solution to above problem, then
\[
\norm{J_* - \Psi w_*}_{1,c} \le \inf_{w, u\in \overline{U}} \norm{J_* - \Psi w}_{\infty, 1/u} \left( c^\top u + \frac{2(\mu_{\pi_*, c}^\top u) (1+ \beta_u)}{1-\gamma} \right) \; . 
\]
The above bound improves \eqref{eq:ALP-apprx} as $\overline{U}$ is larger than $U$ and RHS in the above bound is smaller than RHS of \eqref{eq:ALP-apprx}. Further, they prove that if $\eta$ is a distribution and we choose $c=(1-\gamma) \eta^\top (I-\gamma P^{\pi_{\Psi w_*}})$, then
\[
\norm{J_{\mu_{\Psi w_*}}-J_*}_{1,\eta} \le  \frac{1}{1-\gamma} \left( \inf_{w, u\in \overline{U}} \norm{J_* - \Psi w}_{\infty, 1/u} \left( c^\top u + \frac{2(\mu_{\pi_*, \nu}^\top u) (1+\beta_u)}{1-\gamma} \right)  \right) \; .
\]
Similar methods are also proposed by \citet{Petrik-Zilberstein-2009}. 
One problem with this result is that $c$ is defined in terms of $w_*$, which itself depends on $c$. %This is similar to the issue in \citep{DeFarias-VanRoy-2003}. 
Also, the smoothed ALP formulation uses $\pi_*$ which is not known. \citet{Desai-Farias-Moallemi-2012} also propose a computationally efficient algorithm. Let $\cS$ be a set of $S$ random states drawn under distribution $\mu_{\pi_*, c}$. Let $\cN'\subset \Real^d$ be a known set that contains the solution of SALP. The algorithm solves the following LP:
\begin{align*}
&\max_{w,s}\, c^\top \Psi w - \frac{2}{(1-\gamma) S}  \sum_{x\in \cS} s(x)\,,  \\
&\mbox{s.t.}\quad \forall x\in \cS,\, (\Psi w)(x) \le (L \Psi w)(x) + s(x),\, s\ge \mathbf 0,\, w\in \cN' \; .
\end{align*}
Let $\widehat w$ be the solution of this problem. 
\citet{Desai-Farias-Moallemi-2012} prove high probability bounds on the approximation error $\norm{J_* - \Psi \widehat w}_{1,c}$. However, it is no longer clear if a performance bound on $\norm{J_* - J_{\pi_{\Psi \widehat w}}}_{1,c}$ can be obtained from this approximation bound. 

%show that if we were able to sample from the stationary distribution of the optimal policy, then LP~\eqref{eq:primal-apprx} can be solved efficiently. The constant vector is in the span of basis functions. 

Next, we turn our attention to average cost ALP, which is a more challenging problem and is also the focus of this paper. Let $\nu$ be a distribution over states, $u:\cX\ra [1,\infty)$, $\eta>0$, $\gamma\in [0,1]$, $P_\gamma^\pi = \gamma P^\pi + (1-\gamma) \mathbf{1} \nu^\top$, and $L_\gamma h = \min_{\pi} (\ell_\pi + P_\gamma^\pi h)$. 
\citet{DeFarias-VanRoy-2006} propose the following optimization problem:
\begin{align}
\label{eq:primal-approx1}
&\min_{w, s_1, s_2} s_1 + \eta s_2\,, \\
\notag
&\mbox{s.t.}\quad L_\gamma \Psi w - \Psi w + s_1 \mathbf{1} + s_2 u \ge \mathbf 0,\, s_2 \ge 0 \; .
\end{align}
Let $(w_*, s_{1,*}, s_{2,*})$ be the solution of this problem. 
Define the mixing time of policy $\pi$ by 
\[
\tau_\pi = \inf\left\{ \tau \ : \ \abs{\frac{1}{t} \sum_{t'=0}^{t-1} \nu^\top (P^\pi)^{t'} \ell_\pi - \lambda_\pi   } \le \frac{\tau}{t},\, \forall t \right\} \; .
\]
Let $\tau_* = \liminf_{\delta\ra 0} \{ \tau_\pi : \lambda_\pi \le \lambda_* + \delta \}$. Let $\pi_{\gamma}^*$ be the optimal policy when discount factor is $\gamma$. Let $\pi_{\gamma, w}$ be the greedy policy with respect to $\Psi w$ when discount factor is $\gamma$, $\mu_{\gamma, \pi}^\top = 	(1-\gamma) \sum_{t=0}^\infty \gamma^t \nu^\top (P^\pi)^t$ and $\mu_{\gamma, w} = \mu_{\gamma, \pi_{\gamma, w}}$. 
 \citet{DeFarias-VanRoy-2006} prove that if $\eta \ge (2-\gamma) \mu_{\gamma, \pi_{\gamma}^*}^\top u$,
\[
\lambda_{w_*} - \lambda_* \le \frac{(1+\beta) \eta \max(D'', 1)}{1-\gamma} \min_{w} \norm{h_\gamma^* - \Psi w}_{\infty, 1/u} + (1-\gamma) (\tau_* + \tau_{\pi_{w_*}}) \,,
\]
where $\beta = \max_\pi \norm{I - \gamma P^\pi}_{\infty, 1/u}$, $D'' = \mu_{\gamma, w_*}^\top V /  (\nu^\top V)$ and $V=L_\gamma \Psi w_* - \Psi w_* + s_{1,*} \mathbf{1} +  s_{2,*} u$. Similar results are obtained more recently by \citet{Veatch-2013}. 

An appropriate choice for vector $\nu$ is $\nu=\mu_{\gamma, w_*}$. Unfortunately, $w_*$ depends on $\nu$. We should also note that solving \eqref{eq:primal-approx1} can be computationally expensive. \citet{DeFarias-VanRoy-2006} propose constraint sampling techniques similar to \citep{DeFarias-VanRoy-2004}, but no performance bounds are provided. 

\citet{Wang-Lizotte-Bowling-Schuurmans-2008} study ALP \eqref{eq:dual-apprx} and show that there is a dual form for standard value function based algorithms, including on-policy and off-policy updating and policy improvement. They also study the convergence of these methods, but no performance bounds are shown.

\if0

In the primal form \eqref{eq:primal}, an extra constraint $h = \Psi w$ is added to obtain
\begin{align}
\label{eq:primal-apprx}
&\max_{\lambda, w} \lambda\,, \\
\notag
&\mbox{s.t.}\quad B(\lambda e + \Psi w) \ge \ell + P \Psi w \; .
\end{align}

Let $\lambda_*$ be the average loss of the optimal policy and $(\widetilde\lambda, \widetilde w)$ be the solution of this LP. It turns out that the greedy policy with respect to $\widetilde w$ can be arbitrarily bad even if $\abs{\lambda_* - \widetilde \lambda}$ was small~\citep{deFarias-VanRoy-NIPS-2003}. \citet{deFarias-VanRoy-NIPS-2003} propose a two stage procedure, where the above LP is the first stage and the second stage is 
\begin{align}
\label{eq:second-stage}
\notag
&\max_{w} c^\top \Psi w\,, \\
&\mbox{s.t.}\quad B(\widetilde\lambda e + \Psi w) \le \ell + P \Psi w \, ,
\end{align}
where $c$ is a user specified weight vector. Let $\widehat w$ be the solution of the second stage. Let $\lambda_w$ and $\mu_w$ be the average loss and the stationary distribution of the greedy policy with respect to $\Psi w$. \citet{deFarias-VanRoy-NIPS-2003} prove that 
\[
\lambda_w - \lambda_* \le \norm{h_* - \Psi w}_{1, \mu_w} \; .
\]
Further, it is shown that $\widehat w$ minimizes $\norm{h_{\widetilde\lambda} - \Psi w}_{1, c}$ and that
\[
\norm{h_* - \Psi \widehat w}_{1, c} \le \norm{h_{\widetilde\lambda} - \Psi \widehat w}_{1, c} + (\lambda_* - \widetilde \lambda) c^\top (I - P^{\pi_*})^{-1} e\,,
\]
which implies that $\norm{h_* - \Psi \widehat w}_{1, c}$ is small. To get that $\lambda_{\widehat w} - \lambda_*$ is small, we need to use $c = \mu_{\widehat w}$. Value of  $\mu_{\widehat w}$ is obtained only after solving the optimization problem \eqref{eq:second-stage}. To fix this problem, \citet{deFarias-VanRoy-NIPS-2003} propose to solve \eqref{eq:second-stage} iteratively, using $c = \mu_{\widehat w}$ from the solution of the last round. 

The above approach has two problems. First, it is still not clear if the average loss of the resultant policy is close to $\lambda_*$ (or the best policy in the policy class). Second, iteratively solving \eqref{eq:second-stage} is computationally expensive. Similar results are also obtained by \citet{Desai-Farias-Moallemi-2012}, who also show that if we were able to sample from the stationary distribution of the optimal policy, then LP~\eqref{eq:primal-apprx} can be solved efficiently.  
\fi

\if0
\section{Proof of Lemma~\ref{lem:lem1}}
\label{app:proofs}

\begin{proof}
From $\norm{u^\top (P-B)}_1=\epsilon''$, we get that for any $x'\in \cX$,
\begin{align*}
\sum_{(x,a)\in \cS} & u(x,a) (P-B)_{(x,a), x'} = \\
&\qquad- \sum_{(x,a)\in \cN} u(x,a) (P-B)_{(x,a), x'} + f(x')
\end{align*}
such that $\sum_{x'} \abs{f(x')} = \epsilon''$. Let $h = [u]_+/\norm{[u]_+}_1$. Let $H' = \norm{h^\top (B-P)}_1$. We write
\begin{align*}
H' &= \sum_{x'} \abs{\sum_{(x,a)\in \cS} h(x,a)  (B-P)_{(x,a),x'} } \\ 
&=  \frac{1}{1+\epsilon'} \sum_{x'} \abs{\sum_{(x,a)\in \cS} u(x,a)  (B-P)_{(x,a),x'} } \\ 
&= \frac{1}{1+\epsilon'}  \sum_{x'} \abs{-\sum_{(x,a)\in \cN} u(x,a) (B-P)_{(x,a),x'} + f(x')} \\
&\le \frac{1}{1+\epsilon'} \Bigg( \sum_{x'} \abs{-\sum_{(x,a)\in \cN} u(x,a) (B-P)_{(x,a),x'}} \\ 
&\qquad+ \sum_{x'} \abs{f(x')} \Bigg) \\
&\le \frac{1}{1+\epsilon'} \left( \epsilon'' + \sum_{(x,a)\in \cN} \sum_{x'} \abs{u(x,a)} \abs{(B-P)_{(x,a),x'}} \right)  \\
&\le \frac{1}{1+\epsilon'} \left( \epsilon'' + \sum_{(x,a)\in \cN} 2 \abs{u(x,a)} \right) \le \frac{2\epsilon' + \epsilon''}{1+\epsilon'} \\ 
&\le 2 \epsilon' + \epsilon''  \; .
\end{align*}
Vector $h$ is almost a stationary distribution in the sense that
\beq
\label{eq:almost-stationary}
\norm{h^\top (B-P)}_1 \le 2 \epsilon' + \epsilon''\; .
\eeq
Let $\norm{w}_{1,\cS} = \sum_{(x,a)\in \cS} \abs{w(x,a)}$. First, we have that
\begin{align*}
\norm{h - u}_1 &\le \norm{h - \frac{u}{1+\epsilon'}}_1 + \norm{u - \frac{u}{1+\epsilon'}}_{1,\cS} \le 2 \epsilon' \; .
\end{align*}
Next we bound $\norm{ \mu_{h} - h}_1$. Let $\nu_0 = h$ be the initial state distribution. We show that as we run policy $h$ (equivalently, policy $ \mu_{h}$), the state distribution converges to $ \mu_{h}$ and this vector is close to $h$. From \eqref{eq:almost-stationary}, we have $\nu_0^\top P = h^\top B + v_0$, where $v_0$ is such that $\norm{v_0}_1\le 2\epsilon'+\epsilon''$. Let $M^{h}$ be a $X\times (XA)$ matrix that encodes policy $h$, $M_{(i,(i-1)A+1)\mbox{-}(i,iA)}^{h}=h(.|x_i)$. Other entries of this matrix are zero. We get that
\begin{align*}
h^\top P M^{h} &= (h^\top B + v_0) M^{h} = h^\top B M^{h} + v_0 M^{h}\\ 
&= h^\top + v_0 M^{h} \,,
 \end{align*}
where we used the fact that $h^\top B M^{h} = h^\top$. Let $\nu_1^\top = h^\top P M^{h}$ which is the state-action distribution after running policy $h$ for one step. Let $v_1 = v_0 M^{h} P = v_0 P^{h}$ and notice that as $\norm{v_0}_1\le 2\epsilon'+\epsilon''$, we also have that $\norm{v_1}_1 = \norm{P^{h \top} v_0^\top}_1 \le \norm{v_0}_1\le 2\epsilon'+\epsilon''$. Thus,
\[
\nu_1^\top P = h^\top P + v_1 =  h^\top B + v_0 + v_1  \; .
\]
By repeating this argument for $k$ rounds, we get that
\[
\nu_k^\top = h^\top + (v_0 + v_1 + \dots + v_{k-1}) M^{h}
\]
and it is easy to see that $\norm{(v_0 + v_1 + \dots + v_{k-1}) M^{h}}_1 \le \sum_{i=0}^{k-1} \norm{v_i}_1 \le k( 2\epsilon'+\epsilon'')$. Thus, $\norm{\nu_k - h}_1 \le k (2 \epsilon'+\epsilon'')$. Now notice that $\nu_k$ is the state-action distribution after $k$ rounds of policy $ \mu_{h}$. Thus, by mixing assumption, $\norm{\nu_k -  \mu_{h}}_1 \le e^{-k/\tau(h)}$. By the choice of $k=\tau(h) \log (1/\epsilon')$, we get that $\norm{ \mu_{h} - h}_1 \le \tau(h) \log(1/\epsilon') (2\epsilon'+\epsilon'') + \epsilon'$. 

\end{proof}
\fi

\section{A Reduction to Stochastic Convex Optimization}
\label{sec:reduction}

In this section, we describe our algorithm as a  reduction from Markov decision problems to stochastic convex optimization. The main idea is to convert the ALP~\eqref{eq:dual-apprx} into an unconstrained optimization over $\Theta$ by adding a function of the constraint violations to the objective, then run stochastic gradient descent with unbiased estimated of the gradient.
%%%%%%%%%%%%%%%%%%%%%%%%%%%%%%%%%%%%%%%%%%%%%%%%%%%%%%
%At a very high level, the algorithm is a stochastic subgradient descent method whose steps are guided by the loss function and also randomly chosen constraints. This leads to obtaining small losses while respecting the constraints. 
%%%%%%%%%%%%%%%%%%%%%%%%%%%%%%%%%%%%%%%%%%%%%%%%%%%%%%

For a positive constant $H$, form the following convex cost function by adding a multiple of the total constraint violations to the objective of the LP~\eqref{eq:dual-apprx}:
\begin{equation}\label{eq:objective_function}
\begin{split}	
c(\theta) &= \ell^\top (\mu_0 + \Phi \theta) + H \norm{[\mu_0 + \Phi\theta]_{-}}_1 + H \norm{(P - B)^\top (\mu_0 + \Phi\theta)}_1 \\
&= \ell^\top (\mu_0 + \Phi \theta) + H \norm{[\mu_0 + \Phi\theta]_{-}}_1+ H \norm{(P - B)^\top \Phi\theta}_1 \\
&= \ell^\top (\mu_0 + \Phi \theta) + H \sum_{(x,a)} \abs{ [\mu_0(x,a) + \Phi_{(x,a),:}\theta]_{-}} + H \sum_{x'} \abs{(P - B)_{:,x'}^\top \Phi \theta} \; .
\end{split}
\end{equation}
We justify using this surrogate loss as follows. Suppose we find a near optimal vector $\widehat \theta$ such that $c(\widehat \theta) \le \min_{\theta\in\Theta} c(\theta) + O(\epsilon)$. We will prove
\begin{enumerate}
\item that $\norm{[\mu_0 + \Phi\widehat\theta]_{-}}_1$ and $\norm{(P -
B)^\top (\mu_0 + \Phi\widehat\theta)}_1$ are small and $\mu_0 + \Phi\widehat\theta$ is close to $\mu_{\widehat\theta}$
%$\mu_{\mu_0 + \Phi\widehat\theta}$ 
(by Lemma~\ref{lem:lem1} in Section~\ref{sec:reduction}), and
\item that $\ell^\top (\mu_0 + \Phi \widehat\theta) \le \min_{\theta\in\Theta} c(\theta) + O(\epsilon)$. 
\end{enumerate}
As we will show, these two facts imply that with high probability, for any $\theta\in\Theta$,
\begin{align*}
\mu_{\widehat\theta}^\top \ell \le \mu_\theta^\top \ell &+ \frac{1}{\epsilon}  \norm{[\mu_0 + \Phi\theta]_{-}}_1+ \frac{1}{\epsilon} \norm{(P - B)^\top (\mu_0 + \Phi\theta)}_1 + O(\epsilon) \; ,
\end{align*}
which is to say that minimization of $c(\theta)$ solves the extended efficient large-scale ALP problem.

Unfortunately, calculating the gradients of $c(\theta)$ is $O(XA)$. Instead, we construct unbiased estimators and use stochastic gradient descent. Let $T$ be the number of iterations of our algorithm. Let $q_1$ and $q_2$ be distributions over the state-action and state space, respectively (we will later discuss how to choose them). Let $((x_t,a_t))_{t=1\dots T}$ be i.i.d. samples from $q_1$ and $(x_t')_{t=1\dots T}$ be i.i.d. samples from $q_2$. At round $t$, the algorithm estimates subgradient $\nabla c(\theta)$ by
\begin{align}
\label{eq:grad-est}
g_t(\theta) &= \ell^\top \Phi - H \frac{\Phi_{(x_t,a_t),:}}{q_1(x_t,a_t)} \one{\mu_0(x_t,a_t)+\Phi_{(x_t,a_t),:} \theta < 0} + H  \frac{(P-B)_{:,x_t'}^\top \Phi}{q_2(x_t')} s((P-B)_{:,x_t'}^\top \Phi \theta).
\end{align}
This estimate is fed to the projected subgradient method, which in turn generates a vector $\theta_t$. After $T$ rounds, we average vectors $(\theta_t)_{t=1\dots T}$ and obtain the final solution $\widehat \theta_T = \sum_{t=1}^T \theta_t/T$. Vector $\mu_0 + \Phi \widehat \theta_T$ defines a policy, which in turn defines a stationary distribution $\mu_{\widehat \theta_T}$.\footnote{Recall that $\mu_{\theta}$ is the stationary distribution of policy 
\[
\pi_{\theta}(a|x) = \frac{[\mu_0(x,a) + \Phi_{(x,a),:} \theta]_{+}}{\sum_{a'} [\mu_0(x,a') + \Phi_{(x,a'),:} \theta]_{+}} \; .
\]
With an abuse of notation, we use $\mu_{\theta}$ to denote policy $\pi_\theta$ as well.
} 
The algorithm is shown in Figure~\ref{alg:SGD}.

\begin{figure}
\begin{center}
\framebox{\parbox{12cm}{
\begin{algorithmic}
\STATE \textbf{Input: } Constant $S>0$, number of rounds $T$, constant $H$.  %stationary distribution $\mu_0$, distributions $q_1$ and $q_2$, 
\STATE Let $\Pi_{\Theta}$ be the Euclidean projection onto $\Theta$. 
\STATE Initialize $\theta_1 = 0$. 
\FOR{$t:=1,2,\dots, T$}
\STATE Sample $(x_t,a_t)\sim q_1$ and $x_t'\sim q_2$. 
\STATE Compute subgradient estimate $g_t$ \eqref{eq:grad-est}.
\STATE Update $\theta_{t+1} = \Pi_{\Theta} (\theta_t - \eta_t g_t)$. 
\ENDFOR
\STATE $\widehat \theta_T = \frac{1}{T}\sum_{t=1}^T \theta_t$.
\STATE Return policy $\pi_{\widehat \theta_T}$.
\end{algorithmic}
}}
\end{center}
\caption{The Stochastic Subgradient Method for Markov Decision Processes}
\label{alg:SGD}
\end{figure}

\subsection{Analysis}
In this section, we state and prove our main result, Theorem~\ref{thm:main}. We begin with a discussion of the assumptions we make then follow with the main theorem. We break the proof into two main ingredients. First, we demonstrate that a good approximation to the surrogate loss gives a feature vector that is almost a stationary distribution; this is Lemma \ref{lem:lem1}.
Second, we justify the use of unbiased gradients in Theorem \ref{thm:stoch-gradient} and Lemma \ref{lem:risk-bound}. The section concludes with the proof.

We make a mixing assumption on the MDP so that any policy quickly converges to its stationary distribution. 
\begin{ass}[Fast Mixing]
\label{ass:uniform-mixing}
For any policy $\pi$, there exists a constant $\tau(\pi)>0$ such that for all distributions $d$ and $d'$ over the state space, $\norm{d P^\pi - d' P^\pi}_1 \le e^{-1/\tau(\pi)} \norm{d - d'}_1$. 
\end{ass}
%Finally, we need the following assumption to ensure that cost estimates are bounded. 
Define
\begin{align*}
C_1 = \max_{(x,a)\in \cX\times \cA}\frac{\norm{\Phi_{(x,a),:}}}{q_1(x, a)}\,, \qquad C_2 = \max_{x\in \cX}\frac{\norm{(P - B)_{:,x}^\top \Phi}}{q_2(x)} \; .
\end{align*}
%Constants $C_1$ and $C_2$ do not depend on the size of the state space. 
These constants appear in our performance bounds. So we would like to choose distributions $q_1$ and $q_2$ such that $C_1$ and $C_2$ are small. 
For example, if there is $C'>0$ such that for any $(x,a)$ and $i$, $\Phi_{(x,a), i}\le C'/(X A)$ and each column of $P$ has only $N$ non-zero elements, then we can simply choose $q_1$ and $q_2$ to be  uniform distributions. Then it is easy to see that 
\begin{align*}
\frac{\norm{\Phi_{(x,a),:}}}{q_1(x, a)} \le C'\,, \qquad \frac{\norm{(P - B)_{:,x}^\top \Phi}}{q_2(x)} \le C' (N + A) \; .
\end{align*}
%\todoy{I guess the above bounds can be improved}
%is always bounded and thus, $\EE{e^{a (c_t(\theta)-c(\theta))^2}} \le 2$ for an appropriate $a=O(1/H^2)$. This in turn implies that $c_t(\theta)-c(\theta)$ is a $\sqrt{3/a}$-sub-Gaussian random variable. \todoy{ref} 
As another example, if $\Phi_{:, i}$ are exponential distributions and feature values at neighboring states are close to each other, then we can choose $q_1$ and $q_2$ to be appropriate exponential distributions so that $\norm{\Phi_{(x,a),:}}/q_1(x, a)$ and $\norm{(P - B)_{:,x}^\top \Phi}/q_2(x)$ are always bounded. Another example is when there exists a constant $C''>0$ such that for any $x$, $\norm{P_{:,x}^\top \Phi}/\norm{B_{:,x}^\top \Phi} < C''$\footnote{This condition requires that columns of $\Phi$ are close to their \textit{one step look-ahead}.}  and we have access to an efficient algorithm that computes $Z_1 = \sum_{(x,a)} \norm{\Phi_{(x,a),:}}$ and $Z_2 = \sum_x \norm{B_{:,x}^\top \Phi}$ and can sample from $q_1(x,a)= \norm{\Phi_{(x,a),:}}/Z_1$ and $q_2(x)=\norm{B_{:,x}^\top \Phi}/Z_2$. In what follows, we assume that such distributions $q_1$ and $q_2$ are known.

We now state the main theorem.
\begin{thm}
\label{thm:main}
Consider an expanded efficient large-scale dual ALP problem. %Suppose we have access to a backward simulator and $C1/C2$-distributions. 
Suppose we apply the stochastic subgradient method (shown in Figure~\ref{alg:SGD}) to the problem. Let $\epsilon\in (0,1)$. Let $T=1/\epsilon^4$ be the number of rounds and $H=1/\epsilon$ be the constraints multiplier in subgradient estimate \eqref{eq:grad-est}. Let $\widehat \theta_T$ be the output of the stochastic subgradient method after $T$ rounds and let the learning rate be $\eta_1=\dots=\eta_T=S/(G'\sqrt{T})$, where $G' = \sqrt{d} + H (C_1 + C_2)$. Define $V_1(\theta) = \sum_{(x,a)} \abs{ [\mu_0(x,a) + \Phi_{(x,a),:}\theta]_{-}}$ and $V_2(\theta) = \sum_{x'} \abs{(P - B)_{:,x'}^\top ( \mu_0 + \Phi \theta) }$. 
Then, for any $\delta\in (0,1)$, with probability at least $1-\delta$,
\beq
\label{eq:exp-loss}
\mu_{\widehat\theta_T}^\top \ell \le \min_{\theta\in\Theta} \left( \mu_\theta^\top \ell + \frac{1}{\epsilon} ( V_1(\theta) + V_2(\theta) ) + O(\epsilon) \right) \; , %O(\epsilon \log(1/\delta)) \; .
\eeq
where constants hidden in the big-O notation are polynomials in $S$, $d$, $C_1$, $C_2$, $\log(1/\delta)$, $V_1(\theta)$, $V_2(\theta)$, $\tau(\mu_{\theta})$, and $\tau(\mu_{\widehat\theta_T})$. 
\end{thm}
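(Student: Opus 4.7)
The plan is to view Theorem~\ref{thm:main} as the composition of three ingredients: (i) a generic high-probability bound for projected stochastic subgradient descent applied to the convex function $c(\theta)$ defined in~\eqref{eq:objective_function}; (ii) Lemma~\ref{lem:lem1}, which turns approximate feasibility of $\mu_0+\Phi\theta$ into $L_1$-closeness between $\mu_0+\Phi\theta$ and the true stationary distribution $\mu_\theta$; and (iii) a simple chain of inequalities using $\|\ell\|_\infty\le 1$ to convert the surrogate-loss comparison into an average-loss comparison. I would organize the argument so that the only nontrivial approximation errors that appear at the end are $(a)$ the SGD excess risk, $(b)$ a term from Lemma~\ref{lem:lem1} applied at $\widehat\theta_T$, and $(c)$ a term from Lemma~\ref{lem:lem1} applied at the comparator $\theta$.

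First I would verify the hypotheses of the stochastic convex optimization step. One checks that $c$ is convex (the three summands are all convex in $\theta$), that $g_t(\theta)$ in~\eqref{eq:grad-est} is an unbiased estimator of a subgradient of $c$ (the sampling distributions $q_1,q_2$ are exactly what is needed to debias the two penalty terms), and that $\|g_t(\theta)\|\le G'=\sqrt d+H(C_1+C_2)$ by the choice of $C_1,C_2$ together with the fact that each column of $\Phi$ contributes a unit-like factor and $|s(\cdot)|\le 1$ in the sign term. Feeding these bounds into a standard martingale-concentration bound for projected stochastic subgradient descent on the compact set $\Theta$ (diameter $2S$), I would conclude that, with probability at least $1-\delta$, simultaneously for every $\theta\in\Theta$,
\begin{equation*}
c(\widehat\theta_T)\le c(\theta)+O\!\Bigl(\frac{SG'\sqrt{\log(1/\delta)}}{\sqrt T}\Bigr).
\end{equation*}
Substituting $T=1/\epsilon^4$ and $H=1/\epsilon$, the right-hand side becomes $O(\epsilon\log(1/\delta))$, which is the $O(\epsilon)$ slack I need. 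This is Theorem~\ref{thm:stoch-gradient}/Lemma~\ref{lem:risk-bound} in the text.

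Next I would use this to control the constraint violations at the output. Because $c(\widehat\theta_T)\ge \ell^\top(\mu_0+\Phi\widehat\theta_T)+H(V_1(\widehat\theta_T)+V_2(\widehat\theta_T))$ and $|\ell^\top(\mu_0+\Phi\widehat\theta_T)|\le 1+2V_1(\widehat\theta_T)$ using $\theta\in\Theta\Rightarrow (\mu_0+\Phi\theta)^\top\mathbf 1=1$, the inequality $c(\widehat\theta_T)\le c(\theta)+O(\epsilon)$ applied at any fixed $\theta\in\Theta$ yields $V_1(\widehat\theta_T)+V_2(\widehat\theta_T)=O(\epsilon)$ (absorbing $H=1/\epsilon$ and polynomial constants depending on $V_1(\theta),V_2(\theta)$). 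Now Lemma~\ref{lem:lem1} applies at $\widehat\theta_T$: it returns $\|\mu_{\widehat\theta_T}-(\mu_0+\Phi\widehat\theta_T)\|_1 \le O(\tau(\mu_{\widehat\theta_T})\log(1/\epsilon)\cdot(V_1(\widehat\theta_T)+V_2(\widehat\theta_T)))=O(\epsilon)$ up to polylog factors, where the mixing assumption (Assumption~\ref{ass:uniform-mixing}) lets the Markov chain started from the near-feasible $\mu_0+\Phi\widehat\theta_T$ converge to $\mu_{\widehat\theta_T}$. Applying Lemma~\ref{lem:lem1} analogously at the comparator $\theta$ gives $\|\mu_\theta-(\mu_0+\Phi\theta)\|_1$ bounded by a polynomial in $V_1(\theta),V_2(\theta),\tau(\mu_\theta)$.

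The final step is an arithmetic chain using $\|\ell\|_\infty\le 1$:
\begin{equation*}
\mu_{\widehat\theta_T}^\top\ell\;\le\;\ell^\top(\mu_0+\Phi\widehat\theta_T)+\|\mu_{\widehat\theta_T}-(\mu_0+\Phi\widehat\theta_T)\|_1\;\le\;c(\widehat\theta_T)+O(\epsilon),
\end{equation*}
followed by $c(\widehat\theta_T)\le c(\theta)+O(\epsilon)=\ell^\top(\mu_0+\Phi\theta)+\frac{1}{\epsilon}(V_1(\theta)+V_2(\theta))+O(\epsilon)$, and then $\ell^\top(\mu_0+\Phi\theta)\le\mu_\theta^\top\ell+\|\mu_\theta-(\mu_0+\Phi\theta)\|_1$. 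Taking the infimum over $\theta\in\Theta$ yields~\eqref{eq:exp-loss}. The main obstacle is Step~3: Lemma~\ref{lem:lem1} has to trade the penalty-size $V_1+V_2$ against mixing time $\tau$ without losing an inverse-$\epsilon$ factor, and the argument crucially uses that the mass above the negative part of $\mu_0+\Phi\theta$ is normalized to $1+O(V_1)$ so that rolling forward the chain costs only $O(\tau\log(1/\epsilon))$ steps of error accumulation. Everything else is bookkeeping: collecting the $\tau$-, $V_i$-, $C_i$-, $S$-, $d$-, and $\log(1/\delta)$-dependencies into the polynomial factor hidden in the $O(\epsilon)$.
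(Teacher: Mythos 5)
Your proposal is correct and follows essentially the same route as the paper: the high-probability SGD excess-risk bound (Theorem~\ref{thm:stoch-gradient} via Lemma~\ref{lem:risk-bound}) gives $c(\widehat\theta_T)\le c(\theta)+O(\epsilon)$, from which the constraint violations $V_1(\widehat\theta_T),V_2(\widehat\theta_T)$ are extracted exactly as in \eqref{eq:V1}--\eqref{eq:V2}, and Lemma~\ref{lem:lem1} is then invoked at both $\widehat\theta_T$ and the comparator $\theta$ before the final chain of inequalities. The only cosmetic difference is your bookkeeping of the loss term (via $1+2V_1$ rather than the paper's $2(1+S)$), and your shorthand ``$V_1(\widehat\theta_T)+V_2(\widehat\theta_T)=O(\epsilon)$'' should be read, as you note, with $V_1(\theta)+V_2(\theta)$ absorbed into the hidden constants, matching the paper's definitions of $\epsilon'$ and $\epsilon''$.
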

Functions $V_1$ and $V_2$ are bounded by small constants for any set of normalized features: for any $\theta\in\Theta$,
\begin{align*}
V_1(\theta) &\le \norm{\mu_0}_1 + \norm{\Phi \theta}_1 \le 1 + \sum_{(x,a)} \abs{\Phi_{(x,a),:} \theta} \le 1 + S d \,, \\
V_2(\theta) &\le \sum_{x'} \abs{P_{:,x'}^\top ( \mu_0 + \Phi \theta) } + \sum_{x'} \abs{B_{:,x'}^\top ( \mu_0 + \Phi \theta) } \\ 
&\le \left(\sum_{x'} P_{:,x'}\right)^\top [ \mu_0 + \Phi \theta ]_+ + \left(\sum_{x'} B_{:,x'}\right)^\top [\mu_0 + \Phi \theta]_+ \\  
&= (2) \mathbf 1^\top [ \mu_0 + \Phi \theta ]_+ \\
&\le (2) \mathbf 1^\top \abs{ \mu_0 + \Phi \theta } \\
&= 2 + 2 S \; .
\end{align*}
Thus $V_1$ and $V_2$ can be very small given a carefully designed set of features. The output $\widehat \theta_T$ is a random vector as the algorithm is based on a stochastic convex optimization method. The above theorem shows that with high probability the policy implied by this output is near optimal. 

The optimal choice for $\epsilon$ is $\epsilon=\sqrt{V_1(\theta_*) + V_2(\theta_*)}$, where $\theta_*$ is the minimizer of RHS of \eqref{eq:exp-loss} and not known in advance. Once we obtain $\widehat \theta_T$, we can estimate $V_1(\widehat \theta_T)$ and $V_2(\widehat \theta_T)$ and use input $\epsilon=\sqrt{V_1(\widehat \theta_T) + V_2(\widehat \theta_T)}$ in a second run of the algorithm. This implies that the error bound scales like $O(\sqrt{V_1(\theta_*) + V_2(\theta_*)})$.

The next lemma, providing the first ingredient of the proof, relates the amount of constraint violation of a vector $\theta$ to resulting stationary distribution $\mu_\theta$. %The proof can be found in Appendix~\ref{app:proof1}.
\begin{lem}
\label{lem:lem1}
Let $u\in \Real^{X A}$ be a vector. 
Let $\cN$ be the set of points $(x,a)$ where $u (x,a)<0$ and $\cS$ be complement of $\cN$.  
Assume
\[
\sum_{x,a} u (x,a) = 1, \sum_{(x,a)\in \cN } \abs{u(x,a)} \le \epsilon', \norm{u^\top (P-B)}_1\le \epsilon'' .
\]
Vector $[u]_+/\norm{[u]_+}_1$ defines a policy, which in turn defines a stationary distribution $ \mu_{u}$. We have that 
\[
\norm{ \mu_{u} - u}_1 \le \tau(\mu_u) \log(1/\epsilon') (2\epsilon'+\epsilon'') + 3\epsilon' \; .
\]
\end{lem}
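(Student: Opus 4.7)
The natural bridge between $u$ and $\mu_u$ is the honest probability distribution $h := [u]_+/\smallnorm{[u]_+}_1$, which defines the same policy as $u$ (call it $\pi$). The plan has three steps: (i) show $\smallnorm{h-u}_1 \le 2\epsilon'$; (ii) show that $h$ is nearly stationary for the state-action transition under $\pi$, with drift at most $2\epsilon'+\epsilon''$ per step; and (iii) use the mixing assumption to conclude that $h$ and $\mu_u$ are close by running the chain from $h$ for an appropriately chosen number of steps, then combine with (i).

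For step (i), since $\sum_{(x,a)} u(x,a)=1$ and the negative mass is at most $\epsilon'$, one has $\smallnorm{[u]_+}_1 = 1 + \smallnorm{[u]_-}_1 \le 1+\epsilon'$. Comparing $h$ to $u$ entry by entry, the entries on $\cN$ contribute at most $\epsilon'$ and rescaling the entries on $\cS$ by $1/(1+\epsilon')$ contributes at most another $\epsilon'$, yielding $\smallnorm{h-u}_1 \le 2\epsilon'$. For step (ii), set $v_0^\top := h^\top(P-B)$. Combining the hypothesis $\smallnorm{u^\top(P-B)}_1 \le \epsilon''$ with (i) and the fact that rows of $P$ and $B$ are probability vectors (so $P-B$ has rows of $L_1$ norm at most $2$) gives $\smallnorm{v_0}_1 \le 2\epsilon'+\epsilon''$, with a slightly sharper constant available by noting that $h$ and $u/(1+\epsilon')$ agree exactly on $\cS$. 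The algebraic backbone is the identity $h^\top B M^h = h^\top$, where $M^h$ is the $X\times(XA)$ matrix encoding the policy $\pi$: $B$ marginalizes actions out of the state-action vector, and $M^h$ reintroduces them according to $\pi$, recovering $h$ exactly. Therefore a single step of the state-action transition yields $\nu_1^\top := h^\top P M^h = h^\top B M^h + v_0^\top M^h = h^\top + v_0^\top M^h$, with $\smallnorm{v_0^\top M^h}_1 = \smallnorm{v_0}_1$ because $M^h$ is an $L_1$-isometry on row state vectors.

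For step (iii), iterating the transition $\mapsto (\cdot) P M^h$ gives $\nu_k^\top = h^\top + \sum_{i=0}^{k-1} v_i^\top M^h$ with $v_i = v_0 (P^{\pi})^i$; since $P^\pi$ is a stochastic matrix (hence $L_1$-nonexpansive on signed measures), each $\smallnorm{v_i}_1 \le 2\epsilon'+\epsilon''$, and the telescoping sum obeys $\smallnorm{\nu_k - h}_1 \le k(2\epsilon'+\epsilon'')$. On the other hand, Assumption~\ref{ass:uniform-mixing} applied to the state marginal of $\nu_k$, together with the $L_1$-isometry of $M^h$, gives $\smallnorm{\nu_k - \mu_u}_1 \le e^{-k/\tau(\mu_u)}$. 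Choosing $k = \tau(\mu_u)\log(1/\epsilon')$ balances the two bounds, yielding $\smallnorm{\mu_u - h}_1 \le \epsilon' + \tau(\mu_u)\log(1/\epsilon')(2\epsilon'+\epsilon'')$, and combining with (i) by the triangle inequality finishes the proof. The step that requires the most care, and that I expect to be the main obstacle, is the identity $h^\top B M^h = h^\top$ together with the $L_1$-isometry of $M^h$ on signed state vectors: these are what allow the per-step error to be written cleanly as a convolution of $v_0$ with $(P^\pi)^i$ without amplification, making the whole telescoping argument go through.
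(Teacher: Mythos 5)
Your proposal is correct and follows essentially the same route as the paper's own proof: the same intermediate distribution $h=[u]_+/\smallnorm{[u]_+}_1$, the same bound $\smallnorm{h-u}_1\le 2\epsilon'$, the same near-stationarity bound $\smallnorm{h^\top(P-B)}_1\le 2\epsilon'+\epsilon''$ (the paper derives it by splitting the sum over $\cS$ and $\cN$, you by perturbing $u$, but both rely on the agreement of $h$ and $u/(1+\epsilon')$ on $\cS$ to get the sharp constant), and the same telescoping-plus-mixing argument via the identity $h^\top B M^h=h^\top$ with $k=\tau(\mu_u)\log(1/\epsilon')$. The only difference is cosmetic: you are slightly more explicit about the $L_1$-isometry of $M^h$ and the passage to the state marginal before invoking Assumption~\ref{ass:uniform-mixing}.
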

\begin{proof}%[Proof of Lemma~\ref{lem:lem1}]
Let $f = u^\top (P-B)$. From $\norm{u^\top (P-B)}_1 \le \epsilon''$, we get that for any $x'\in \cX$,
\begin{align*}
\sum_{(x,a)\in \cS} & u(x,a) (P-B)_{(x,a), x'} = - \sum_{(x,a)\in \cN} u(x,a) (P-B)_{(x,a), x'} + f(x')
\end{align*}
such that $\sum_{x'} \abs{f(x')} \le \epsilon''$. Let $h = [u]_+/\norm{[u]_+}_1$. Let $H' = \norm{h^\top (B-P)}_1$. We write
\begin{align*}
H' &= \sum_{x'} \abs{\sum_{(x,a)\in \cS} h(x,a)  (B-P)_{(x,a),x'} } \\ 
&=  \frac{1}{1+\epsilon'} \sum_{x'} \abs{\sum_{(x,a)\in \cS} u(x,a)  (B-P)_{(x,a),x'} } \\ 
&= \frac{1}{1+\epsilon'}  \sum_{x'} \abs{-\sum_{(x,a)\in \cN} u(x,a) (B-P)_{(x,a),x'} + f(x')} \\
&\le \frac{1}{1+\epsilon'} \left( \sum_{x'} \abs{-\sum_{(x,a)\in \cN} u(x,a) (B-P)_{(x,a),x'}} + \sum_{x'} \abs{f(x')} \right) \\
&\le \frac{1}{1+\epsilon'} \left( \epsilon'' + \sum_{(x,a)\in \cN} \sum_{x'} \abs{u(x,a)} \abs{(B-P)_{(x,a),x'}} \right)  \\
&\le \frac{1}{1+\epsilon'} \left( \epsilon'' + \sum_{(x,a)\in \cN} 2 \abs{u(x,a)} \right) \le \frac{2\epsilon' + \epsilon''}{1+\epsilon'} \\ 
&\le 2 \epsilon' + \epsilon''  \; .
\end{align*}
Vector $h$ is almost a stationary distribution in the sense that
\beq
\label{eq:almost-stationary}
\norm{h^\top (B-P)}_1 \le 2 \epsilon' + \epsilon''\; .
\eeq
Let $\norm{w}_{1,\cS} = \sum_{(x,a)\in \cS} \abs{w(x,a)}$. First, we have that
\begin{align*}
\norm{h - u}_1 &\le \norm{h - \frac{u}{1+\epsilon'}}_1 + \norm{u - \frac{u}{1+\epsilon'}}_{1,\cS} \le 2 \epsilon' \; .
\end{align*}
Next we bound $\norm{ \mu_{h} - h}_1$. Let $\nu_0 = h$ be the initial state distribution. We will show that as we run policy $h$ (equivalently, policy $ \mu_{h}$), the state distribution converges to $ \mu_{h}$ and this vector is close to $h$. From \eqref{eq:almost-stationary}, we have $\mu_0^\top P = h^\top B + v_0$, where $v_0$ is such that $\norm{v_0}_1\le 2\epsilon'+\epsilon''$. Let $M^{h}$ be a $X\times (XA)$ matrix that encodes policy $h$, $M_{(i,(i-1)A+1)\mbox{-}(i,iA)}^{h}=h(\cdot|x_i)$. Other entries of this matrix are zero. We get that
\begin{align*}
h^\top P M^{h} &= (h^\top B + v_0) M^{h} = h^\top B M^{h} + v_0 M^{h}= h^\top + v_0 M^{h} \,,
 \end{align*}
where we used the fact that $h^\top B M^{h} = h^\top$. Let $\mu_1^\top = h^\top P M^{h}$ which is the state-action distribution after running policy $h$ for one step. Let $v_1 = v_0 M^{h} P = v_0 P^{h}$ and notice that as $\norm{v_0}_1\le 2\epsilon'+\epsilon''$, we also have that $\norm{v_1}_1 = \norm{P^{h \top} v_0^\top}_1 \le \norm{v_0}_1\le 2\epsilon'+\epsilon''$. Thus,
\[
\mu_1^\top P = h^\top P + v_1 =  h^\top B + v_0 + v_1  \; .
\]
By repeating this argument for $k$ rounds, we get that
\[
\mu_k^\top = h^\top + (v_0 + v_1 + \dots + v_{k-1}) M^{h}
\]
and it is easy to see that 
\[\norm{(v_0 + v_1 + \dots + v_{k-1}) M^{h}}_1 \le \sum_{i=0}^{k-1} \norm{v_i}_1 \le k( 2\epsilon'+\epsilon'').
\] Thus, $\norm{\mu_k - h}_1 \le k (2 \epsilon'+\epsilon'')$. Now notice that $\mu_k$ is the state-action distribution after $k$ rounds of policy $ \mu_{h}$. Thus, by mixing assumption, $\norm{\mu_k -  \mu_{h}}_1 \le e^{-k/\tau(h)}$. By the choice of $k=\tau(h) \log (1/\epsilon')$, we get that $\norm{ \mu_{h} - h}_1 \le \tau(h) \log(1/\epsilon') (2\epsilon'+\epsilon'') + \epsilon'$. 

\end{proof}

The second ingredient is the validity of using estimates of the subgradients. We assume access to estimates of the subgradient of a convex cost function. Error bounds can be obtained from results in the stochastic convex optimization literature; the following theorem, a high-probability version of Lemma~3.1 of \citet{Flaxman-Kalai-McMahan-2005} for stochastic convex optimization, is sufficient. 
\begin{thm}
\label{thm:stoch-gradient}
Let $Z$ be a positive constant and $\cZ$ be a bounded subset of $\Real^d$ such that for any $z\in \cZ$, $\norm{z}\le Z$. 
Let $(f_t)_{t=1,2,\dots,T}$ be a sequence of real-valued convex cost functions defined over $\cZ$. Let $z_1,z_2,\dots,z_T\in \cZ$ be defined by $z_1=0$  and $z_{t+1} = \Pi_{\cZ} (z_t - \eta f_t')$, where $\Pi_{\cZ}$ is the Euclidean projection onto $\cZ$, $\eta>0$ is a learning rate, and $f_1',\dots,f_T'$ are unbiased subgradient estimates such that $\EE{f_t' | z_t} = \nabla f(z_t)$ and $\norm{f_t'}\le F$ for some $F>0$. Then, for $\eta = Z/(F\sqrt{T})$, for any $\delta\in (0,1)$, with probability at least $1-\delta$,
\begin{align}
\label{eq:stoch-gradient-regret}
\sum_{t=1}^T &f_t(z_t) - \min_{z\in\cZ} \sum_{t=1}^T f_t(z) \le Z F \sqrt{T} + \sqrt{(1 + 4 Z^2 T ) \left(2 \log\frac{1}{\delta} + d \log \left( 1 + \frac{Z^2 T}{d} \right) \right) } \; .
\end{align}
\end{thm}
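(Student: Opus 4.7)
The plan is to decompose the regret into a deterministic online gradient descent term and a stochastic martingale term, handle each separately, and combine via a union bound. By convexity, for any fixed $z \in \cZ$ and any subgradient $\nabla f_t(z_t)$ of $f_t$ at $z_t$,
\begin{align*}
\sum_{t=1}^T \bigl( f_t(z_t) - f_t(z) \bigr)
\le \sum_{t=1}^T \langle f_t', z_t - z \rangle
+ \sum_{t=1}^T \langle \nabla f_t(z_t) - f_t', z_t - z \rangle .
\end{align*}
This splits the work cleanly: the first sum is the regret of projected online gradient descent run with the stochastic gradients $f_t'$, and the second is a sum of (conditional) martingale differences.

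For the first sum, the standard Zinkevich analysis, starting from the non-expansive step $\norm{z_{t+1} - z}^2 \le \norm{z_t - z}^2 - 2\eta \langle f_t', z_t - z \rangle + \eta^2 \norm{f_t'}^2$, summing in $t$, and using $\norm{z_1} = 0$, $\norm{z} \le Z$, and $\norm{f_t'} \le F$, yields $\sum_t \langle f_t', z_t - z\rangle \le Z^2/(2\eta) + \eta T F^2 / 2$; with $\eta = Z/(F\sqrt{T})$ this equals the advertised $Z F \sqrt{T}$, and the bound holds for every $z \in \cZ$ simultaneously and deterministically.

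For the second sum, let $Y_t = f_t' - \nabla f_t(z_t)$, which is a martingale difference with respect to the filtration generated by the past samples (since $z_t$ is predictable) and satisfies $\norm{Y_t} \le 2F$. The critical difficulty is that the eventual comparator $z^\star = \argmin_{z \in \cZ} \sum_t f_t(z)$ is data-dependent, so $\langle Y_t, z_t - z^\star \rangle$ is \emph{not} itself a martingale difference. I would therefore bound the stochastic term uniformly over $\cZ$ via
\begin{align*}
\sup_{z \in \cZ} \sum_{t=1}^T \langle -Y_t, z_t - z \rangle
\le \Bigl| \sum_{t=1}^T \langle Y_t, z_t \rangle \Bigr|
+ Z \,\Bigl\| \sum_{t=1}^T Y_t \Bigr\|,
\end{align*}
control the scalar part by Azuma--Hoeffding, and control the vector norm by a self-normalized martingale concentration inequality in the style of Abbasi-Yadkori, P\'al, and Szepesv\'ari. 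Applying the latter with the Gram matrix $V_T = I + \sum_t X_t X_t^\top$ for a suitable predictable $X_t$ with $\norm{X_t} \le 2Z$, the determinant bound $\log\det V_T \le d \log(1 + 4 Z^2 T / d)$ produces exactly the characteristic factor $\sqrt{(1 + 4 Z^2 T)(2\log(1/\delta) + d \log(1 + Z^2 T / d))}$ that appears in the theorem.

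The main obstacle is precisely the randomness of $z^\star$: a naive covering-net union bound over $\cZ$ would work but lose a polynomial dimension factor outside the logarithm, whereas the self-normalized bound keeps the dimension dependence inside a logarithm. The final step is a union bound, at level $\delta/2$ each, over the scalar Azuma event and the vector self-normalized event, combined with the deterministic regret bound of the second paragraph, to obtain the claimed inequality with probability at least $1-\delta$.
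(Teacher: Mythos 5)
Your decomposition and your treatment of the optimization part are exactly the paper's: the paper defines the perturbed losses $h_t(z)=f_t(z)+z^\top\eta_t$ with $\eta_t=f_t'-\nabla f_t(z_t)$, applies Zinkevich's regret bound to get the $ZF\sqrt{T}$ term, and is left with the stochastic remainder $\sum_t \langle z_*-z_t,\eta_t\rangle$; your linearized OGD step is the same computation. The divergence, and the gap, is in how you handle that remainder. The ``critical difficulty'' you identify does not exist in this theorem: the $f_t$ are a \emph{fixed} sequence of convex functions (in the application they are all equal to the deterministic surrogate cost $c$), so $z_*=\argmin_{z\in\cZ}\sum_t f_t(z)$ is deterministic, $z_t-z_*$ is predictable, and $\langle Y_t, z_t-z_*\rangle$ \emph{is} a martingale difference sequence. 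The paper exploits exactly this: it bounds the single scalar self-normalized sum $S_T=\sum_t\langle z_*-z_t,\eta_t\rangle$ via the Abbasi-Yadkori-style inequality with predictable multipliers $z_*-z_t$, which is where the factor $1+\sum_t\norm{z_*-z_t}^2\le 1+4Z^2T$ in the stated bound comes from. No uniformity over $\cZ$ and no union bound are needed.

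Your workaround, besides being unnecessary, does not deliver the stated inequality. Splitting $\sup_{z}\sum_t\langle -Y_t,z_t-z\rangle\le\abs{\sum_t\langle Y_t,z_t\rangle}+Z\norm{\sum_t Y_t}$ leaves you with a \emph{vector-valued} martingale $\sum_t Y_t$, and the self-normalized inequality you invoke is for scalar noise against predictable vector multipliers $X_t$; there is no choice of ``predictable $X_t$ with $\norm{X_t}\le 2Z$'' that turns $\norm{\sum_t Y_t}$ into that form, and any correct bound on $\norm{\sum_t Y_t}$ must scale with the noise magnitude $F$, not with $Z$, so it cannot ``produce exactly'' the factor $\sqrt{(1+4Z^2T)(\cdots)}$. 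Even granting both concentration steps, you would end with two additive deviation terms at confidence $\delta/2$ each, i.e.\ a strictly weaker inequality than \eqref{eq:stoch-gradient-regret}. The fix is simply to drop the supremum over $\cZ$, observe that $z_*$ is deterministic, and apply the scalar self-normalized bound directly to $\sum_t\langle z_*-z_t,\eta_t\rangle$.
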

\begin{proof}
Let $z_* = \argmin_{z\in\cZ} \sum_{t=1}^T f_t(z)$ and $\eta_t = f_t' - \nabla f_t (z_t)$. Define function $h_t:\cZ\ra\Real$ by $h_t(z) = f_t(z) + z \eta_t$. Notice that $\nabla h_t(z_t) = \nabla f_t(z_t) + \eta_t = f_t'$. By Theorem~1 of \citet{Zinkevich-2003}, we get that
\[
\sum_{t=1}^T h_t(z_t) - \sum_{t=1}^T h_t(z_*) \le \sum_{t=1}^T h_t(z_t) - \min_{z\in\cZ}\sum_{t=1}^T h_t(z) \le Z F \sqrt{T} \; .
\] 
Thus,
\[
\sum_{t=1}^T f_t(z_t) - \sum_{t=1}^T f_t(z_*) \le Z F \sqrt{T} + \sum_{t=1}^T (z_* - z_t) \eta_t \; .
\]
Let $S_t = \sum_{s=1}^{t-1} (z_* - z_s) \eta_s$, which is a self-normalized sum~\citep{delaPena-Lai-Shao-2009}. By Corollary~3.8 and Lemma~E.3 of \citet{Abbasi-Yadkori-2012}, we get that for any $\delta\in (0,1)$, with probability at least $1-\delta$,
\begin{align*}
\abs{S_t} &\le \sqrt{\left(1 + \sum_{s=1}^{t-1} (z_t - z_*)^2 \right) \left(2 \log\frac{1}{\delta} + d \log \left( 1 + \frac{Z^2 t}{d} \right) \right) }  \\
&\le \sqrt{(1 + 4 Z^2 t ) \left(2 \log\frac{1}{\delta} + d \log \left( 1 + \frac{Z^2 t}{d} \right) \right) } \; .
\end{align*}
Thus,
\[
\sum_{t=1}^T f_t(z_t) - \min_{z\in\cZ} \sum_{t=1}^T f_t(z) \le Z F \sqrt{T} + \sqrt{(1 + 4 Z^2 T ) \left(2 \log\frac{1}{\delta} + d \log \left( 1 + \frac{Z^2 T}{d} \right) \right) } \; .
\]
\end{proof}

\begin{rem}
\label{rem:Jensen}
Let $B_T$ denote RHS of \eqref{eq:stoch-gradient-regret}. 
If all cost functions are equal to $f$, then by convexity of $f$ and an application of Jensen's inequality, we obtain that $f(\sum_{t=1}^T z_t/T) - \min_{z\in \cZ} f(z) \le B_T/T$. 
\end{rem}
As the next lemma shows, Theorem~\ref{thm:stoch-gradient} can be applied in our problem to optimize cost function $c$. 
\begin{lem}
\label{lem:risk-bound}
Under the same conditions as in Theorem~\ref{thm:main}, we have that for any $\delta\in (0,1)$, with probability at least $1-\delta$, 
\begin{align}
\label{eq:err-bnd}
c(\widehat \theta_T) &-  \min_{\theta\in\Theta} c(\theta) \le \frac{S G'}{\sqrt{T}} + \sqrt{\frac{1 + 4 S^2 T}{T^2} \left(2 \log\frac{1}{\delta} + d \log \left( 1 + \frac{S^2 T}{d} \right) \right) } \; .
\end{align}
\end{lem}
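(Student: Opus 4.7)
The plan is to cast the lemma as a direct application of the stochastic convex optimization guarantee in Theorem~\ref{thm:stoch-gradient}, taking $f_t = c$ for every $t$, $\cZ = \Theta$ (so $Z = S$), and $f_t' = g_t(\theta_t)$ with the estimator defined in~\eqref{eq:grad-est}. The bulk of the work is to verify the two hypotheses of that theorem: unbiasedness of $g_t(\theta_t)$ and a uniform norm bound $\norm{g_t} \le G'$.

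First I would check unbiasedness. The cost $c(\theta)$ is a sum of three convex terms. The linear term $\ell^\top(\mu_0+\Phi\theta)$ contributes the deterministic subgradient $\Phi^\top \ell$. For the second term, a valid subgradient is $-H\sum_{(x,a)} \Phi_{(x,a),:}^\top \one{\mu_0(x,a)+\Phi_{(x,a),:}\theta<0}$, and the sampling estimate $-H \Phi_{(x_t,a_t),:}^\top/q_1(x_t,a_t) \cdot \one{\mu_0(x_t,a_t)+\Phi_{(x_t,a_t),:}\theta<0}$, viewed as an expectation over $(x_t,a_t)\sim q_1$, reproduces this subgradient exactly. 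The same importance-weighting argument handles the third term via the sign function $s(\cdot)$ and samples $x_t'\sim q_2$. A minor care point is that at the kinks (where $\mu_0(x,a)+\Phi_{(x,a),:}\theta = 0$ or $(P-B)_{:,x'}^\top\Phi\theta = 0$), the subdifferential is set-valued, but any fixed measurable selection gives a legitimate element, so $g_t(\theta_t)$ is an unbiased subgradient in the conditional expectation sense.

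Second I would bound $\norm{g_t(\theta)}$. Using $\ell\in[0,1]^{XA}$ and normalization of the features one gets $\norm{\Phi^\top\ell}\le\sqrt{d}$; the second piece is bounded by $H\,\norm{\Phi_{(x_t,a_t),:}}/q_1(x_t,a_t) \le H C_1$ by definition of $C_1$; the third by $H\,\norm{(P-B)_{:,x_t'}^\top\Phi}/q_2(x_t') \le H C_2$ by definition of $C_2$. Triangle inequality yields $\norm{g_t(\theta)}\le \sqrt{d}+H(C_1+C_2)=G'$ uniformly in $\theta$ and in the random samples.

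With these two ingredients, Theorem~\ref{thm:stoch-gradient} applied to the sequence $(z_t) = (\theta_t)$ with $\eta = S/(G'\sqrt{T})$ gives, with probability at least $1-\delta$,
\begin{equation*}
\sum_{t=1}^T c(\theta_t) - \min_{\theta\in\Theta}\sum_{t=1}^T c(\theta) \le S G'\sqrt{T} + \sqrt{(1+4S^2 T)\left(2\log\tfrac{1}{\delta}+d\log\bigl(1+\tfrac{S^2 T}{d}\bigr)\right)}.
\end{equation*}
Finally, since the cost function is the same $c$ at every round, Remark~\ref{rem:Jensen} (convexity of $c$ plus Jensen's inequality applied to $\widehat\theta_T = \tfrac{1}{T}\sum_t \theta_t$) lets me divide the whole bound by $T$, yielding exactly~\eqref{eq:err-bnd}.

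The step I expect to be most delicate is the unbiasedness verification at non-differentiable points and the measurability bookkeeping that $g_t$ is an unbiased estimate of $\nabla c(\theta_t)$ conditional on $\theta_t$; everything else is a mechanical norm bound plus invocation of the already-stated theorem.
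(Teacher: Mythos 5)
Your proposal is correct and follows essentially the same route as the paper's proof: verify that $g_t$ is an unbiased subgradient estimate with $\norm{g_t}\le\sqrt{d}+H(C_1+C_2)=G'$, invoke Theorem~\ref{thm:stoch-gradient} with $f_t=c$, $\cZ=\Theta$, $Z=S$, and divide by $T$ via Remark~\ref{rem:Jensen}. The only cosmetic difference is that the paper also computes and bounds the full (non-stochastic) subgradient $\nabla c(\theta)$ explicitly, which is not needed for the final bound.
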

\begin{proof}
We prove the lemma by showing that conditions of Theorem~\ref{thm:stoch-gradient} are satisfied. We begin by calculating the subgradient and bounding its norm with a quantity independent of the number of states. 
If $\mu_0(x,a)+\Phi_{(x,a),:} \theta \ge 0$, then $\nabla_{\theta} \abs{[\mu_0(x,a)+\Phi_{(x,a),:} \theta]_{-}} = 0$. Otherwise, $\nabla_{\theta} \abs{[\mu_0(x,a)+\Phi_{(x,a),:} \theta]_{-}} = -\Phi_{(x,a),:}$. 
Calculating,
\begin{equation}\label{eq:cost_gradient}
\begin{split}
\nabla_{\theta} c(\theta) &= \ell^\top \Phi + H \sum_{(x,a)} \nabla_{\theta} \abs{[\mu_0(x,a)+\Phi_{(x,a),:} \theta]_{-}}+ H \sum_{x'}\nabla_{\theta} \abs{ (P-B)_{:,x'}^\top \Phi \theta}  \\
&= \ell^\top \Phi - H \sum_{(x,a)} \Phi_{(x,a),:} \one{\mu_0(x,a)+\Phi_{(x,a),:} \theta < 0}+ H \sum_{x'}  (P-B)_{:,x'}^\top \Phi s((P-B)_{:,x'}^\top \Phi \theta)   \,,
\end{split}
\end{equation}
where $s(z) = \one{z>0} - \one{z<0} $ is the sign function. Let $\pm$ denote the plus or minus sign (the exact sign does not matter here). 
Let $G=\norm{\nabla_{\theta} c(\theta)}$. We have that
\begin{align*}
G &\le H \sqrt{\sum_{i=1}^d \left(\sum_{x'} \left(\pm\sum_{(x,a)} (P-B)_{(x,a),x'} \Phi_{(x,a),i} \right) \right)^2 } +\norm{ \ell^\top \Phi} + H \sqrt{\sum_{i=1}^d \left(\sum_{(x,a)} \abs{\Phi_{(x,a),i}} \right)^2 } \; .
\end{align*}
Thus,
\begin{align*}
G &\le \sqrt{\sum_{i=1}^d (\ell^\top \Phi_{:,i})^2} + H \sqrt{d} + H \sqrt{\sum_{i=1}^d \left(\sum_{(x,a)} \left(\pm\sum_{x'} (P-B)_{(x,a),x'}  \right) \Phi_{(x,a),i} \right)^2 } \\
&\le \sqrt{d} + H \sqrt{d}  + H \sqrt{\sum_{i=1}^d \left(2 \sum_{(x,a)} \abs{\Phi_{(x,a),i}} \right)^2 } =\sqrt{d} (1 + 3 H )  \,,
\end{align*}
where we used $\abs{ \ell^\top \Phi_{:,i}} \le \norm{\ell}_\infty \norm{\Phi_{:,i}}_1 \le 1$.

%The terms $P_{:,x}^\top \Phi\theta$ and $B_{:,x}^\top \Phi\theta$ can be computed in $O(N+A)$ time as we assume that each column of $P$ has only $N$ non-zero elements (see Assumption~\ref{ass:sparse_connected}). 
Next, we show that norm of the subgradient estimate is bounded by $G'$:
\[
\norm{g_t} \le \norm{\ell^\top \Phi} + H \frac{\norm{\Phi_{(x_t,a_t),:}} }{q_1(x_t,a_t)} + H  \frac{\norm{(P-B)_{:,x_t'}^\top \Phi}}{q_2(x_t')} \le \sqrt{d} + H (C_1 + C_2) \; .
\]

Finally, we show that the subgradient estimate is unbiased:
\begin{align*}
\EE{g_t(\theta)} &= \ell^\top \Phi - H \sum_{(x,a)} q_1(x,a) \frac{\Phi_{(x,a),:}}{q_1(x,a)} \one{\mu_0(x,a)+\Phi_{(x,a),:} \theta < 0} \\
&\qquad\qquad\qquad\qquad+ H \sum_{x'} q_2(x') \frac{(P-B)_{:,x'}^\top \Phi}{q_2(x')} s((P-B)_{:,x'}^\top \Phi \theta) \\
&= \ell^\top \Phi - H \sum_{(x,a)} \Phi_{(x,a),:} \one{\mu_0(x,a)+\Phi_{(x,a),:} \theta < 0}+ H \sum_{x'}  (P-B)_{:,x'}^\top \Phi s((P-B)_{:,x'}^\top \Phi \theta) \\
&= \nabla_{\theta} c(\theta) \; .
\end{align*}
The result then follows from Theorem~\ref{thm:stoch-gradient} and Remark~\ref{rem:Jensen}. 

\end{proof}

With both ingredients in place, we can prove our main result.
\begin{proof}[Proof of Theorem~\ref{thm:main}]
Let $b_T$ be the RHS of \eqref{eq:err-bnd}. Equation \eqref{eq:err-bnd} implies that with high probability for any $\theta\in\Theta$,
\begin{align}
\label{eq:online-to-batch}
\ell^\top (\mu_0 + \Phi \widehat \theta_T) + H\, V_1(\widehat\theta_T) + H\, V_2(\widehat\theta_T) \le \ell^\top (\mu_0 + \Phi\theta)+ H\, V_1(\theta)  + H\, V_2(\theta) + b_T\; .
\end{align}
From \eqref{eq:online-to-batch}, we get that
\begin{align}
\label{eq:V1}
V_1(\widehat \theta_T) &\le \frac{1}{H} \left( 2(1+S) + H\, V_1(\theta)  + H\, V_2(\theta)+ b_T \right) \eqdef \epsilon' \,, \\
\label{eq:V2}
V_2(\widehat \theta_T) &\le \frac{1}{H} \left( 2(1+S) + H\, V_1(\theta)  + H\, V_2(\theta) + b_T \right) \eqdef \epsilon'' \; .
\end{align}
Inequalities \eqref{eq:V1} and \eqref{eq:V2} and Lemma~\ref{lem:lem1} give the following bound:
\beq
\label{eq:eq1}
\abs{ \mu_{\widehat\theta_T}^\top \ell - (\mu_0 + \Phi \widehat\theta_T)^\top \ell} \le \tau(\mu_{\widehat\theta_T}) \log(1/\epsilon') (2\epsilon'+\epsilon'') + 3\epsilon' \; .
\eeq
From \eqref{eq:online-to-batch} we also have
\[
\ell^\top (\mu_0 + \Phi \widehat \theta_T) \le \ell^\top (\mu_0 + \Phi\theta) + H\, V_1(\theta)  + H\, V_2(\theta) + b_T\; ,
\]
which, together with \eqref{eq:eq1} and Lemma~\ref{lem:lem1}, gives the final result:
\begin{align*}
\mu_{\widehat\theta_T}^\top \ell &\le \ell^\top (\mu_0 + \Phi\theta) + H\, V_1(\theta)  + H\, V_2(\theta) + b_T + \tau(\mu_{\widehat\theta_T}) \log(1/\epsilon') (2\epsilon'+\epsilon'') + 3\epsilon' \\
&\le \mu_{\theta}^\top \ell + H\, V_1(\theta)  + H\, V_2(\theta) + b_T+ \tau(\mu_{\widehat\theta_T}) \log(1/\epsilon') (2\epsilon'+\epsilon'') + 3\epsilon' \\ 
&\quad+ \tau(\mu_{\theta}) \log(1/V_1(\theta)) (2V_1(\theta)+V_2(\theta)) + 3V_1(\theta) \; .
\end{align*}
Recall that $b_T=O(H/\sqrt{T})$. Because $ H = 1 / \epsilon$ and $T=1/\epsilon^4$, we get that with high probability, for any $\theta\in\Theta$, $\mu_{\widehat\theta_T}^\top \ell \le \mu_\theta^\top \ell + \frac{1}{\epsilon} ( V_1(\theta) + V_2(\theta) ) + O(\epsilon)$.

\end{proof}

%%%%%%%%%%%%%%%%%%%%%%%%%%%%%%%%%%%%%%%%%%%%%%%%%%%%%%
Let's compare Theorem~\ref{thm:main} with results of \citet{DeFarias-VanRoy-2006}. Their approach is to relate the original MDP to a perturbed version\footnote{In a perturbed MDP, the state process restarts with a certain probability to a \textit{restart distribution}. Such perturbed MDPs are closely related to discounted MDPs.} and then analyze the corresponding ALP. (See Section~\ref{sec:related-work} for more details.) Let $\Psi$ be a feature matrix that is used to estimate value functions. Recall that $\lambda_*$ is the average loss of the optimal policy and $\lambda_{w}$ is the average loss of the greedy policy with respect to value function $\Psi w$. Let $h_\gamma^*$ be the differential value function when the restart probability in the perturbed MDP is $1-\gamma$. For vector $v$ and positive vector $u$, define the weighted maximum norm $\norm{v}_{\infty, u} = \max_x u(x) \abs{v(x)}$. \citet{DeFarias-VanRoy-2006} prove that for appropriate constants $C,C'>0$ and weight vector $u$,  
\beq
\label{eq:deFaVaRo}
\lambda_{w_*} - \lambda_* \le \frac{C}{1-\gamma} \min_{w} \norm{h_\gamma^* - \Psi w}_{\infty, u} + C' (1-\gamma) \; .
\eeq
%Choosing $\gamma$ close to 1 has a similar effect as choosing $\epsilon$ close to 0. 
This bound has similarities to bound \eqref{eq:exp-loss}: tightness of both bounds depends on the quality of feature vectors in representing the relevant quantities (stationary distributions in \eqref{eq:exp-loss} and value functions in \eqref{eq:deFaVaRo}). Once again, we emphasize that the algorithm proposed by \citet{DeFarias-VanRoy-2006} is computationally expensive and requires access to a distribution that depends on optimal policy.  
%%%%%%%%%%%%%%%%%%%%%%%%%%%%%%%%%%%%%%%%%%%%%%%%%%%%%%

%\documentclass{article} % For LaTeX2e
%\usepackage{todonotes}

%\usepackage{enumerate}
%\usepackage[round,comma]{natbib}
%\bibliographystyle{plainnat}
%\usepackage{amsthm}

%\input{Commands}
%\title{Stochastic gradient methods for approximate MDP policy iteration}
%\begin{document}

%\maketitle

\section{Sampling Constraints}
\label{sec:const-sampling}

In this section we describe our second algorithm for average cost MDP problems. Using the results on polytope constraint sampling \citep{DeFarias-VanRoy-2004, Calafiore-Campi-2005, Campi-Garatti-2008}, we reduce approximate the solution to the dual ALP with the solution to a smaller, sampled LP. Basically, \citet{DeFarias-VanRoy-2004} claim that given a set of affine constraints in $\Real^d$ and some measure $q$ over these constraints, if we sample $k=O(d\log(1/\delta)/\epsilon)$ constraints, then with probability at least $1-\delta$, any point that satisfies all of these $k$ sampled constraints also satisfies $1-\epsilon$ of the original set of constraints under measure $q$. This result is independent of the number of original constraints. 

Let $\mathcal L$ be a family of affine constraints indexed by $i$: constraint $i$ is satisfied at point $w\in\Real^{d}$ if $a_i^\top w+b_i\geq 0$. Let $\mathcal I$ be the family of constraints by selecting $k$ random constraints in $\mathcal L$ with respect to measure $q$.
\begin{thm}[\citet{DeFarias-VanRoy-2004}]
\label{thm:vanroy}
Assume there exists a vector that satisfies all constraints in $\mathcal L$. 
For any $\delta$ and $\epsilon$, if we take $m\geq\frac{4}{\epsilon}\left(d\log\frac{12}{\epsilon}+\log\frac{2}{\delta}\right)$, 
then, with probability $1-\delta$, a set $\mathcal I$ of $m$ i.i.d. random variables drawn from $\mathcal L$ with respect to distribution $q$ satisfies
\[
\sup_{\{w : \forall i\in\mathcal I, a_i^\top w+b_i\geq 0\}} q(\{j : a_j^\top w+b_j<0\})\leq\epsilon \;.
\]
\end{thm}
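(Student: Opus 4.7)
The plan is to recognize this as a standard realizable-case PAC learning guarantee for a set system of finite VC dimension, and apply the classical $\epsilon$-net bound. For each $w\in\mathbb{R}^d$, let the ``violation set'' be $V(w)=\{i\in\mathcal L : a_i^\top w+b_i<0\}$. The theorem's conclusion is equivalent to the statement that with probability at least $1-\delta$ over the sample $\mathcal I$, every $w$ with $V(w)\cap\mathcal I=\emptyset$ satisfies $q(V(w))\leq\epsilon$. In PAC language, ``hypotheses'' are points $w$, the unknown target hypothesis is ``satisfied by every constraint in $\mathcal L$'' (or, if no such $w$ exists, a surrogate), and we ask that zero empirical violation frequency under $\mathcal I$ implies small violation probability under $q$. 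This is exactly the setting covered by the $\epsilon$-net theorem.

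Next I would bound the VC dimension of the set system $\mathcal V=\{V(w):w\in\mathbb{R}^d\}$ over the ground set $\mathcal L$. Lift each constraint index $i$ to $\tilde x_i=(a_i,b_i)\in\mathbb{R}^{d+1}$; then $i\in V(w)$ iff $\langle\tilde x_i,(w,1)\rangle<0$, so $V(w)$ is the intersection of the lifted ground set with an affine halfspace in $\mathbb{R}^{d+1}$. Since halfspaces in $\mathbb{R}^{d+1}$ have VC dimension $d+1$, the class $\mathcal V$ has VC dimension at most $d+1$, and in particular $O(d)$ with absorbed constants.

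With this VC bound in hand, I would invoke the standard $\epsilon$-net theorem (Haussler--Welzl, or equivalently the realizable-case PAC bound via the double-sample/symmetrization argument combined with Sauer's lemma): for a class of VC dimension $\nu$, an i.i.d.\ sample of size $m\geq \tfrac{c_1}{\epsilon}\bigl(\nu\log\tfrac{c_2}{\epsilon}+\log\tfrac{1}{\delta}\bigr)$ is an $\epsilon$-net with probability at least $1-\delta$. Taking $\nu=d$ (after absorbing the $+1$ into constants) and matching $c_1,c_2$ to the universal constants produced by the symmetrization step yields the stated $m\geq\tfrac{4}{\epsilon}(d\log\tfrac{12}{\epsilon}+\log\tfrac{2}{\delta})$.

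The main obstacle is accounting: the form of the bound (with the specific constants $4$, $12$, $2$) requires the sharper version of the realizable-case VC inequality rather than a loose Hoeffding plus union bound. A clean alternative, which avoids that bookkeeping entirely, is the scenario-optimization route of Calafiore--Campi: any vertex solution to the sampled LP is pinned by at most $d$ support constraints, and one bounds the probability that such a vertex leaves more than $\epsilon$ mass of $\mathcal L$ violated by summing over the $\binom{m}{d}$ possible supports and using the binomial tail. Either approach suffices, and since the statement is explicitly cited from \citet{DeFarias-VanRoy-2004}, I would present only the reduction to their $\epsilon$-net argument and defer the constant-tracking to their paper.
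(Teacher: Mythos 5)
The paper gives no proof of Theorem~\ref{thm:vanroy}: it is imported verbatim from \citet{DeFarias-VanRoy-2004}, and your sketch correctly reconstructs the argument used there, namely bounding the VC dimension of the violation sets $\{i : a_i^\top w + b_i < 0\}$ (viewed as halfspaces after lifting the constraints to $(a_i,b_i)\in\Real^{d+1}$) and invoking the classical $\epsilon$-net sample-complexity bound, whose standard constants produce exactly $\frac{4}{\epsilon}(d\log\frac{12}{\epsilon}+\log\frac{2}{\delta})$. The only slight imprecision is the remark about ``absorbing the $+1$ into constants'': to match the stated bound with $d$ (not $d+1$) one should use the affine version of Dudley's theorem, under which this class has VC dimension at most $d$ because the parameter $w$ ranges over a $d$-dimensional family with the coefficient of $b$ fixed to one.
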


Our algorithm takes the following inputs: a positive constant $S$, a stationary distribution $\mu_0$, a
set $\Theta = \{ \theta \ : \ \theta^\top \Phi^\top \mathbf 1 = 1-\mu_0^\top \mathbf 1,\, \norm{\theta} \le S \}$, a distribution
$q_1$ over the state-action space, a distribution $q_2$ over the
state space, and constraint violation
functions $v_1:\cX\times \cA\ra [-1,0]$ and $v_2:\cX\ra [0,1]$.
%Let $v_1:\cX\times \cA\ra [-1,0]$ and $v_2:\cX\ra [0,1]$ be two functions that are inputs to our algorithm.
We will consider two families of constraints:
\begin{align*}
\mathcal L_{1} &= \{\mu_0(x,a)+\Phi_{(x,a),:}\theta\geq v_1(x,a) \mid (x,a)\in \cX\times \cA\}\,, \\
\mathcal L_2 &= \left\{ (P-B)_{:,x}^\top(\mu_0+\Phi\theta) \leq v_2(x) \mid x\in \cX \right\}\bigcup\left\{ (P-B)_{:,x}^\top(\mu_0+\Phi\theta) \geq -v_2(x) \mid x\in \cX \right\} \; .
\end{align*}
Let $\theta_*$ be the solution of 
\begin{align}
\label{eq:slp2}
&\min_{\theta\in\Theta} \ell^\top(\mu_0+\Phi\theta)\,, \\
\notag
&\mbox{s.t.}\quad \theta\in \mathcal L_1,\, \theta\in\mathcal L_2,\, \theta \in \Theta \;.
\end{align}
%Note that we are guaranteed to have $\mathbf 1^\top\Phi\theta=1$ since we constrain $\mathbf 1^\top\theta=1$. \todoy{now we have $\mu_0$} 
%Let $\mathcal I_{i}$ be $k_{i}$ constraints sampled under distribution $q_i$ for $i=1,2$. 
The constraint sampling algorithm is shown in Figure~\ref{alg:cs}. We refer to \eqref{eq:slp} as the sampled ALP, while we refer to \eqref{eq:dual-apprx} as the full ALP. 
\begin{figure}
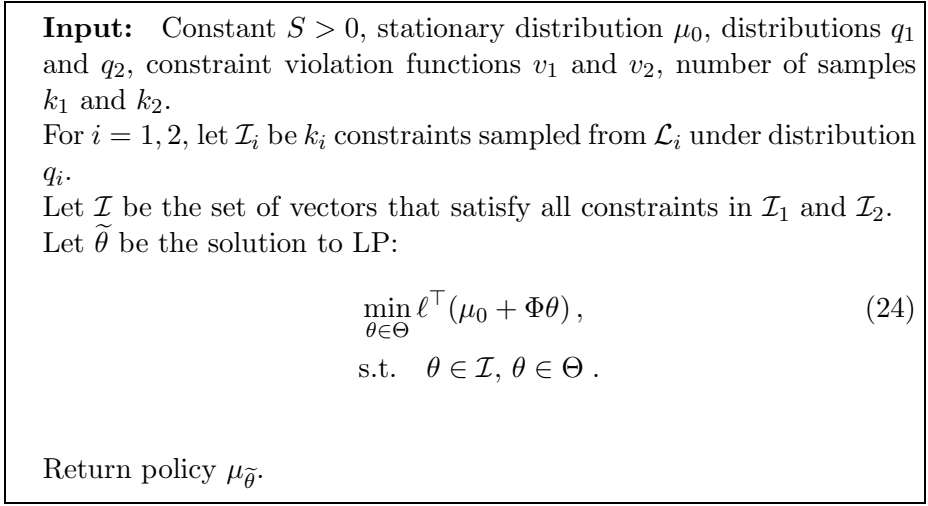

\begin{center}
\framebox{\parbox{12cm}{
\begin{algorithmic}
\STATE \textbf{Input: } Constant $S>0$, stationary distribution $\mu_0$, distributions $q_1$ and $q_2$, constraint violation functions $v_1$ and $v_2$, number of samples $k_1$ and $k_2$. 
\STATE For $i=1,2$, let $\mathcal I_{i}$ be $k_{i}$ constraints sampled from $\mathcal L_{i}$ under distribution $q_i$. 
\STATE Let  $\mathcal I$ be the set of vectors that satisfy all constraints in $\mathcal I_1$ and $\mathcal I_2$.
\STATE Let $\widetilde\theta$ be the solution to LP:
\begin{align}
\label{eq:slp}
&\min_{\theta\in\Theta} \ell^\top(\mu_0+\Phi\theta)\,, \\
\notag
&\mbox{s.t.}\quad \theta\in \mathcal I,\, \theta \in \Theta \;.
\end{align}
\STATE Return policy $\mu_{\widetilde \theta}$.
\end{algorithmic}
}}
\end{center}
\caption{The Constraint Sampling Method for Markov Decision Processes}
\label{alg:cs}
\end{figure}

\subsection{Analysis}

We require Assumption~\ref{ass:uniform-mixing} as well as:
\begin{ass}[Feasibility]
There exists a vector that satisfies all constraints $\mathcal L_1$ and $\mathcal L_2$. 
\end{ass}
Validity of this assumption depends on the choice of functions $v_1$ and $v_2$. Larger functions ensure that this assumption is satisfied, but as we show, this leads to larger error. 

The next two lemmas apply theorem \ref{thm:vanroy} to constraints $\mathcal L_1$ and $\mathcal L_2$, respectively. 
\begin{lemma}
\label{lem:sim_error}
Let $\delta_1\in (0,1)$ and $\epsilon_1\in (0,1)$. 
If we choose $k_1=\frac{4}{\epsilon_1}\left(d\log\frac{12}{\epsilon_1}+\log\frac{2}{\delta_1}\right)$, then with probability at least $1-\delta_1$, $\sum_{(x,a)} \abs{[\mu_0(x,a)+\Phi_{(x,a),:} \widetilde\theta]_{-}} \le S C_1 \epsilon_1 + \norm{v_1}_1$.
\end{lemma}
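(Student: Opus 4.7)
The plan is to apply Theorem~\ref{thm:vanroy} to the constraint family $\mathcal{L}_1$ and then carefully account for the magnitude of negative values at both the $q_1$-violated and $q_1$-satisfied state-action pairs. The constraints $\mu_0(x,a)+\Phi_{(x,a),:}\theta \ge v_1(x,a)$ are affine in $\theta$, and the Feasibility Assumption guarantees a feasible point exists, so Theorem~\ref{thm:vanroy} applies verbatim with $k_1 = \frac{4}{\epsilon_1}(d\log\frac{12}{\epsilon_1} + \log\frac{2}{\delta_1})$. This yields, with probability at least $1-\delta_1$, that the set $V = \{(x,a) : \mu_0(x,a)+\Phi_{(x,a),:}\widetilde\theta < v_1(x,a)\}$ has $q_1$-mass at most $\epsilon_1$. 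I would then decompose
\[
\sum_{(x,a)} \abs{[\mu_0(x,a)+\Phi_{(x,a),:}\widetilde\theta]_{-}} = \sum_{(x,a)\in V}\abs{[\mu_0(x,a)+\Phi_{(x,a),:}\widetilde\theta]_{-}} + \sum_{(x,a)\notin V}\abs{[\mu_0(x,a)+\Phi_{(x,a),:}\widetilde\theta]_{-}}
\]
and bound each piece separately.

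For the complement $V^c$, the sampled constraint holds so $\mu_0(x,a)+\Phi_{(x,a),:}\widetilde\theta \ge v_1(x,a)$. When the sum is already nonnegative the negative part vanishes; otherwise $v_1(x,a) \le \mu_0(x,a)+\Phi_{(x,a),:}\widetilde\theta < 0$ gives $\abs{[\mu_0(x,a)+\Phi_{(x,a),:}\widetilde\theta]_{-}} \le \abs{v_1(x,a)}$, since $v_1 \le 0$. Summing yields $\sum_{(x,a)\notin V}\abs{[\mu_0(x,a)+\Phi_{(x,a),:}\widetilde\theta]_{-}} \le \norm{v_1}_1$.

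For pairs in $V$, the constraint is violated and $\mu_0(x,a)+\Phi_{(x,a),:}\widetilde\theta < 0$. Since $\mu_0 \ge \mathbf 0$ componentwise, $\abs{\mu_0(x,a)+\Phi_{(x,a),:}\widetilde\theta} \le \abs{\Phi_{(x,a),:}\widetilde\theta}$. By Cauchy--Schwarz, $\norm{\widetilde\theta}\le S$, and the definition $C_1 = \max_{(x,a)} \norm{\Phi_{(x,a),:}}/q_1(x,a)$, I obtain
\[
\abs{[\mu_0(x,a)+\Phi_{(x,a),:}\widetilde\theta]_{-}} \le \norm{\Phi_{(x,a),:}}\,\norm{\widetilde\theta} \le S\,C_1\,q_1(x,a).
\]
Summing over $V$ then gives $\sum_{(x,a)\in V}\abs{[\mu_0(x,a)+\Phi_{(x,a),:}\widetilde\theta]_{-}} \le S\,C_1\,q_1(V) \le S\,C_1\,\epsilon_1$.

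Adding the two pieces produces the claimed bound $S\,C_1\,\epsilon_1 + \norm{v_1}_1$. There is no substantial obstacle here: the crucial move is the reweighting step $\norm{\Phi_{(x,a),:}} \le C_1\,q_1(x,a)$, which is precisely what converts a $q_1$-measure bound from Theorem~\ref{thm:vanroy} into a counting-measure bound on the total $L_1$ violation; the rest is routine case-splitting on the sign.
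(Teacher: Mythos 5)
Your proof is correct and follows essentially the same route as the paper's: apply Theorem~\ref{thm:vanroy} to $\mathcal L_1$, split the $L_1$ violation according to whether $\mu_0(x,a)+\Phi_{(x,a),:}\widetilde\theta$ falls below $v_1(x,a)$, bound the satisfied part by $\norm{v_1}_1$, and convert the $q_1$-measure bound on the violated part into a counting bound via $\norm{\Phi_{(x,a),:}}\le C_1\,q_1(x,a)$ and $\norm{\widetilde\theta}\le S$. Your added justifications (using $\mu_0\ge\mathbf 0$ for the step $\abs{\mu_0+\Phi_{(x,a),:}\widetilde\theta}\le\abs{\Phi_{(x,a),:}\widetilde\theta}$, and explicitly invoking the Feasibility Assumption) are correct and slightly more careful than the paper's own write-up.
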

\begin{proof} 
Applying theorem \ref{thm:vanroy}, we have that w.p. $1-\delta_1$, $q_1(\mu_0(x,a)+\Phi_{(x,a),:}\widetilde\theta\geq v_1(x,a))\geq 1-\epsilon_{1}$, and thus 
\[
\sum_{(x,a)} q_1(x,a) \one{\mu_0(x,a)+\Phi_{(x,a),:} \widetilde\theta < v_1(x,a)} \le \epsilon_{1} \; .
\]
Let $L = \sum_{(x,a)} \abs{[\mu_0(x,a)+\Phi_{(x,a),:} \widetilde\theta]_{-}}$. With probability $1-\delta_1$,
\begin{align*}
L &= \sum_{(x,a)} \abs{[\mu_0(x,a)+\Phi_{(x,a),:} \widetilde\theta]_{-}} \one{\mu_0(x,a)+\Phi_{(x,a),:} \widetilde\theta \le v_1(x,a)} \\ 
&\quad+ \sum_{(x,a)} \abs{[\mu_0(x,a)+\Phi_{(x,a),:} \widetilde\theta]_{-}} \one{\mu_0(x,a)+\Phi_{(x,a),:} \widetilde\theta > v_1(x,a)} \\
&\le \sum_{(x,a)} \abs{\Phi_{(x,a),:} \widetilde\theta} \one{\mu_0(x,a)+\Phi_{(x,a),:} \widetilde\theta \le v_1(x,a)} + \norm{v_1}_1 \\ 
&\le \sum_{(x,a)} \norm{\Phi_{(x,a),:}} \norm{\widetilde\theta} \one{\mu_0(x,a)+\Phi_{(x,a),:} \widetilde\theta \le v_1(x,a)} + \norm{v_1}_1 \\ 
&\le \sum_{(x,a)} S C_1 q_1(x,a) \one{\mu_0(x,a)+\Phi_{(x,a),:} \widetilde\theta \le v_1(x,a)} + \norm{v_1}_1  \\
&\le S C_1 \epsilon_1 + \norm{v_1}_1 \; .
\end{align*}
\end{proof}

\begin{lemma}
\label{lem:sta_error}
Let $\delta_2\in (0,1)$ and $\epsilon_2\in (0,1)$. 
If we choose $k_2=\frac{4}{\epsilon_2}\left(d\log\frac{12}{\epsilon_2}+\log\frac{2}{\delta_2}\right)$, then with probability at least $1-\delta_2$, $\norm{(P-B)^\top\Phi \widetilde\theta}_1\leq S C_2 \epsilon_{2} + \norm{v_2}_1$. 
\end{lemma}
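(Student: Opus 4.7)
The plan is to imitate the proof of Lemma~\ref{lem:sim_error}, applying Theorem~\ref{thm:vanroy} to the family $\mathcal{L}_2$ and then splitting the $\ell_1$ norm into a ``satisfied'' piece controlled by $\norm{v_2}_1$ and a ``violated'' piece controlled by the sampling guarantee together with the constant $C_2$.

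First I would observe that since $\mu_0$ is a stationary distribution, $\mu_0^\top(P-B)=\mathbf 0$, so for each $x$ we have $(P-B)_{:,x}^\top(\mu_0+\Phi\theta)=(P-B)_{:,x}^\top\Phi\theta$. Hence the constraints in $\mathcal{L}_2$, taken as a pair of upper and lower bounds at each $x$, are equivalent to the single absolute-value constraint $|(P-B)_{:,x}^\top\Phi\theta|\le v_2(x)$. With the choice of $k_2$ in the statement, Theorem~\ref{thm:vanroy} applied to $\mathcal{L}_2$ with sampling distribution $q_2$ yields, with probability at least $1-\delta_2$,
\[
q_2\bigl(\{x : |(P-B)_{:,x}^\top\Phi\widetilde\theta| > v_2(x)\}\bigr)\ \le\ \epsilon_2.
\]

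Next I would split the sum $\norm{(P-B)^\top\Phi\widetilde\theta}_1=\sum_x |(P-B)_{:,x}^\top\Phi\widetilde\theta|$ according to whether the constraint at $x$ is satisfied or violated by $\widetilde\theta$. For satisfied $x$ the summand is at most $v_2(x)$, contributing at most $\norm{v_2}_1$. For violated $x$ I would use Cauchy--Schwarz and the bound $\norm{\widetilde\theta}\le S$ to get $|(P-B)_{:,x}^\top\Phi\widetilde\theta|\le \norm{(P-B)_{:,x}^\top\Phi}\,S \le S\,C_2\,q_2(x)$, by the definition of $C_2$. Summing over the violated $x$ and using the $\epsilon_2$ bound on their $q_2$-measure yields a contribution of at most $S C_2\epsilon_2$. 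Adding the two pieces gives the claim.

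The only mildly subtle point is handling the fact that $\mathcal{L}_2$ is written as two families of half-space constraints (upper and lower bounds); this is dealt with, as above, by merging each pair into a single absolute-value inequality, which is what the algorithm effectively enforces. Otherwise the argument is a direct analogue of Lemma~\ref{lem:sim_error}, with $C_1,q_1$ replaced by $C_2,q_2$ and $[\,\cdot\,]_-$ replaced by $|\,\cdot\,|$; I do not expect any real obstacle.
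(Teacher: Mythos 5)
Your proposal is correct and follows essentially the same route as the paper's proof: apply Theorem~\ref{thm:vanroy} to get that the $q_2$-measure of violated constraints is at most $\epsilon_2$, then split $\sum_x \abs{(P-B)_{:,x}^\top\Phi\widetilde\theta}$ into satisfied terms (bounded by $\norm{v_2}_1$) and violated terms (bounded via $\abs{(P-B)_{:,x}^\top\Phi\widetilde\theta}\le\norm{(P-B)_{:,x}^\top\Phi}\,\norm{\widetilde\theta}\le S C_2 q_2(x)$, summing to $S C_2\epsilon_2$). Your explicit remark that $\mu_0^\top(P-B)=\mathbf 0$ reconciles the form of $\mathcal{L}_2$ with the quantity bounded in the lemma, a step the paper leaves implicit.
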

\begin{proof}
Applying theorem \ref{thm:vanroy}, we have that $q_2\left( \abs{(P-B)_{:,x}^\top \Phi \widetilde\theta} \le v_2(x) \right)\geq 1-\epsilon_{2}$. This yields
\beq
\label{eq:q2}
\sum_{x} q_2(x) \one{\abs{(P-B)_{:,x}^\top \Phi \widetilde\theta} \ge v_2(x)} \le \epsilon_{2} \; .
\eeq
Let $L' = \sum_x \abs{(P-B)_{:,x}^\top \Phi \widetilde\theta}$. 
Thus, with probability $1-\delta_2$,
\begin{align*}
L' &= \sum_x \abs{(P-B)_{:,x}^\top \Phi \widetilde\theta} \one{\abs{(P-B)_{:,x}^\top \Phi \widetilde\theta} > v_2(x)}\\ 
&\qquad+ \sum_x \abs{(P-B)_{:,x}^\top \Phi \widetilde\theta} \one{\abs{(P-B)_{:,x}^\top \Phi \widetilde\theta} \le v_2(x)} \\
&\le \sum_x \norm{(P-B)_{:,x}^\top \Phi} \norm{ \widetilde\theta} \one{\abs{(P-B)_{:,x}^\top \Phi \widetilde\theta} > v_2(x)} + \norm{v_2}_1  \\ 
&\le \sum_x S C_2 q_2(x) \one{\abs{(P-B)_{:,x}^\top \Phi \widetilde\theta} > v_2(x)} + \norm{v_2}_1  \\ 
&\le S C_2 \epsilon_{2} + \norm{v_2}_1 \,,
\end{align*}
where the last step follows from \eqref{eq:q2}. 

\end{proof}

We are ready to prove the main result of this section. Let $\widetilde \theta$ denote the solution of the sampled ALP, $\theta_{*}$ denote the solution of the full ALP \eqref{eq:slp2}, and $\mu_{\widetilde\theta}$ be the stationary distribution of the solution policy. Our goal is to compare $\ell^\top \mu_{\widetilde\theta}$ and $\ell^\top \mu_{\theta_*}$.
\begin{thm}
\label{thm:const-sampl}
Let $\epsilon\in(0,1)$ and $\delta\in(0,1)$. Let $\epsilon' = S C_1 \epsilon + \norm{v_1}_1$ and $\epsilon'' = S C_2 \epsilon + \norm{v_2}_1$. If we sample constraints with $k_1=\frac{4}{\epsilon}\left(d\log\frac{12}{\epsilon}+\log\frac{4}{\delta}\right)$ and $k_2=\frac{4}{\epsilon}\left(d\log\frac{12}{\epsilon}+\log\frac{4}{\delta}\right)$, then, with probability $1-\delta$,
\begin{align*}
\ell^\top \mu_{\widetilde\theta}  &\le \ell^\top \mu_{\theta_*} + \tau(\mu_{\widetilde\theta}) \log(1/\epsilon') (2\epsilon'+\epsilon'') + 3\epsilon' \\ 
&\quad+ \tau(\mu_{*}) \log(1/\norm{v_1}) (2\norm{v_1}+\norm{v_2}) + 3\norm{v_1}  \; .
\end{align*}
\end{thm}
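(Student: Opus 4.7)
The plan is to chain the two sampling lemmas (Lemmas~\ref{lem:sim_error} and~\ref{lem:sta_error}) with Lemma~\ref{lem:lem1} applied twice, once to the sampled solution $\widetilde\theta$ and once to the full-ALP solution $\theta_*$, and then compare objective values using the fact that $\theta_*$ remains feasible for the sampled LP. I expect the routine part to be the union bound and the chain of inequalities; the one step that requires care is verifying that the hypotheses of Lemma~\ref{lem:lem1} are satisfied for both vectors (normalization and the two smallness bounds).

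First, I would apply Lemma~\ref{lem:sim_error} with parameters $\delta_1 = \delta/2$, $\epsilon_1 = \epsilon$ and Lemma~\ref{lem:sta_error} with $\delta_2 = \delta/2$, $\epsilon_2 = \epsilon$, noting that the prescribed $k_1$ and $k_2$ match these choices. A union bound then gives, with probability at least $1-\delta$, both
\[
V_1(\widetilde\theta) = \sum_{(x,a)} \bigl|[\mu_0(x,a)+\Phi_{(x,a),:}\widetilde\theta]_{-}\bigr| \le S C_1 \epsilon + \|v_1\|_1 = \epsilon',
\]
and $V_2(\widetilde\theta) = \|(P-B)^\top(\mu_0+\Phi\widetilde\theta)\|_1 \le S C_2 \epsilon + \|v_2\|_1 = \epsilon''$. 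Since $\widetilde\theta \in \Theta$ enforces $(\mu_0+\Phi\widetilde\theta)^\top\mathbf 1 = 1$, Lemma~\ref{lem:lem1} applies to $u = \mu_0+\Phi\widetilde\theta$ and yields
\[
\|\mu_{\widetilde\theta} - (\mu_0+\Phi\widetilde\theta)\|_1 \le \tau(\mu_{\widetilde\theta})\log(1/\epsilon')(2\epsilon'+\epsilon'') + 3\epsilon',
\]
which bounds $|\mu_{\widetilde\theta}^\top\ell - (\mu_0+\Phi\widetilde\theta)^\top\ell|$ by the same quantity since $\|\ell\|_\infty \le 1$.

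Next, the feasibility assumption together with the fact that $\mathcal I_1\cup\mathcal I_2$ is a subset of $\mathcal L_1\cup\mathcal L_2$ guarantees $\theta_*$ is feasible for the sampled LP \eqref{eq:slp}. Therefore optimality of $\widetilde\theta$ for the sampled LP gives
\[
(\mu_0+\Phi\widetilde\theta)^\top\ell \le (\mu_0+\Phi\theta_*)^\top\ell .
\]
Now I apply Lemma~\ref{lem:lem1} to $\theta_*$: by feasibility of $\theta_*$ for \eqref{eq:slp2} we have $V_1(\theta_*) \le \|v_1\|_1$ and $V_2(\theta_*) \le \|v_2\|_1$, so
\[
|\mu_{\theta_*}^\top\ell - (\mu_0+\Phi\theta_*)^\top\ell| \le \tau(\mu_{\theta_*})\log(1/\|v_1\|_1)(2\|v_1\|_1+\|v_2\|_1) + 3\|v_1\|_1.
\]

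Finally, I would assemble the three pieces: start from $\mu_{\widetilde\theta}^\top\ell$, pay the perturbation cost to move to $(\mu_0+\Phi\widetilde\theta)^\top\ell$, use sampled-LP optimality to replace $\widetilde\theta$ by $\theta_*$, and pay the second perturbation cost to move from $(\mu_0+\Phi\theta_*)^\top\ell$ to $\mu_{\theta_*}^\top\ell$. The resulting inequality is exactly the stated bound. The main obstacle, insofar as there is one, is bookkeeping the probability budget in the union bound and confirming that the inputs to Lemma~\ref{lem:lem1} satisfy its hypotheses on both occasions; everything else is a direct application of results already proved in this section.
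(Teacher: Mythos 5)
Your proposal is correct and follows essentially the same route as the paper's own proof: a union bound over Lemmas~\ref{lem:sim_error} and~\ref{lem:sta_error} with $\delta_1=\delta_2=\delta/2$, two applications of Lemma~\ref{lem:lem1} (to $\widetilde\theta$ and to $\theta_*$), and the observation that $\theta_*$ remains feasible for the sampled LP so $\ell^\top(\mu_0+\Phi\widetilde\theta)\le\ell^\top(\mu_0+\Phi\theta_*)$. Your explicit remarks on $\|\ell\|_\infty\le 1$ and on checking the normalization hypothesis of Lemma~\ref{lem:lem1} are details the paper leaves implicit, but the argument is the same.
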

\begin{proof}
Let $\delta_1=\delta_2=\delta/2$. By Lemmas~\ref{lem:sim_error} and \ref{lem:sta_error}, w.p. $1-\delta$, $\sum_{(x,a)} \abs{[\mu_0(x,a)+\Phi_{(x,a),:} \widetilde\theta]_{-}} \le \epsilon'$ and $\norm{(P-B)^\top(\mu_0+\Phi\widetilde\theta)}_1\leq\epsilon''$. Then by Lemma~\ref{lem:lem1}, 
\[
\abs{\ell^\top \mu_{\widetilde\theta} - \ell^\top (\mu_0+\Phi \widetilde\theta)} \le \tau(\mu_{\widetilde\theta}) \log(1/\epsilon') (2\epsilon'+\epsilon'') + 3\epsilon' \; .
\]
We also have that $\ell^\top (\mu_0+ \Phi \widetilde\theta) \le \ell^\top (\mu_0+\Phi \theta_{*})$. Thus,
\begin{align*}
\ell^\top \mu_{\widetilde\theta} &\le \ell^\top (\mu_0+\Phi\theta_{*}) + \tau(\mu_{\widetilde\theta}) \log(1/\epsilon') (2\epsilon'+\epsilon'') + 3\epsilon' \\
&\le \ell^\top \mu_{\theta_*} + \tau(\mu_{\widetilde\theta}) \log(1/\epsilon') (2\epsilon'+\epsilon'') + 3\epsilon' \\
&\quad+ \tau(\mu_{\theta_*}) \log(1/\norm{v_1}) (2\norm{v_1}+\norm{v_2}) + 3\norm{v_1} ,
\end{align*}
where the last step follows from Lemma~\ref{lem:lem1}.
\end{proof}
%Recall functions $V_1$ and $V_2$ as defined in Theorem~\ref{thm:main}. By definitions of $\theta_*$ and $v_1$ and $v_2$, it is easy to see that $V_1(\theta_*) \le \norm{v_1}_1$ and $V_2(\theta_*) \le \norm{v_2}_1$. On the other hand, the bound in Theorem~\ref{thm:main} scales with $1/\epsilon$, which is not the case in the above theorem. Thus, the two bounds are not directly comparable. 

\section{Experiments}

\begin{figure*}[t]\label{fig:4Dqueue}
\centering
\includegraphics[scale=1.5,clip=true]{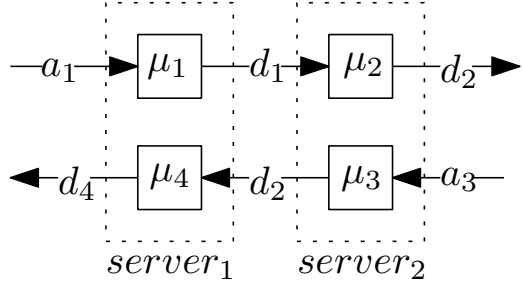}
\caption{The 4D queueing network. Customers arrive at queue $\mu_1$ or $\mu_3$ then are referred to queue $\mu_2$ or $\mu_4$, respectively. Server 1 can either process queue 1 or 4, and server 2 can only process queue 2 or 3.}
\end{figure*}

In this section, we apply both algorithms to the four-dimensional discrete-time queueing network illustrated in Figure \ref{fig:4Dqueue}. This network has a relatively long history; see, e.g. \cite{Rybko-Stolyar-1992} and more recently \cite{DeFarias-VanRoy-2003} (c.f. section 6.2). There are four queues, $\mu_1,\ldots,\mu_4$, each with state $0,\ldots,B$. Since the cardinality of the state space is $X=(1+B)^4$, even a modest $B$ results in huge state-spaces. For time $t$, let $X_t\in X$ be the state and $s_{i,t}\in\{0,1\}$, $i=1,2,3,3$ denote whether queue $i$ is being served. Server 1 only serves queue 1 or 4, server 2 only serves queue 2 or 3, and neither server can idle. Thus, $s_{1,t}+s_{4,t}=1$ and $s_{2,t}+s_{3,t}=1$. The dynamics are as follows. At each time $t$, the following random variables are sampled independently: $A_{1,t}\sim\text{Bernoulli}(a_1)$, $A_{3,t}\sim\text{Bernoulli}(a_3)$, and $D_{i,t}\sim\text{Bernoulli}(d_i*s_{i,t})$ for $i=1,2,3,4$. Using $e_1,\ldots,e_4$ to denote the standard basis vectors, the dynamics are:
\begin{align*}
X'_{t+1}=&X_t+A_{1,t}e_1+A_{3,t}e_3\\
&+D_{1,t}(e_2-e_1)-D_{2,t}e_2\\
&+D_{3,t}(e_4-e_3)-D_{4,t}e_4,
\end{align*}
and $X_{t+1}=\max(\mathbf{0},\min(\mathbf(B),X'_{t+1}))$ (i.e. all four states are thresholded from below by 0 and above by $B$). 
The loss function is the total queue size: $\ell(X_t)=||X_t||_1$. We compared our method against two common heuristics. In the first, denoted LONGER, each server operates on the queue that is longer with ties broken uniformly at random (e.g. if queue 1 and 4 had the same size, they are equally likely to be served). In the second, denoted LBFS (last buffer first served), the downstream queues always have priority (server 1 will serve queue 4 unless it has length 0, and server 2 will serve queue 2 unless it has length 0). These heuristics are common and have been used an benchmarks for queueing networks (e.g. \cite{DeFarias-VanRoy-2003}).

We used $a_1=a_3=.08$, $d_1=d_2=.12$, and $d_3=d_4=.28$, and buffer sizes $B_1=B_4=38$, $B_2=B_3=25$ as the parameters of the network.. The asymmetric size was chosen because server 1 is the bottleneck and tend to have has longer queues. The first two features are the stationary distributions corresponding to two heuristics.  We also included two types of non-stationary-distribution features. For every interval $(0,5],(6,10],\ldots,(45,50]$ and action $A$, we added a feature $\psi$ with $\phi(x,a)=1$ if $\ell(x,a)$ is in the interval and $a=A$. To define the second type, consider the three intervals $I_1=[0,10]$, $I_2=[11, 20]$, and $I_3=[21, 25]$. For every 4-tuple of intervals $(J_1,J_2,J_3,J_4)\in\{I_1,I_2,I_3\}^4$ and action $A$, we created a feature $\psi$ with $\psi(x,a)=1$ only if $x_i\in J_i$ and $a=A$. Every feature was normalized to sum to 1. In total, we had 372 features which is about a $10^{4}$ reduction in dimension from the original problem.

\subsection{Stochastic Gradient Descent}
\label{sec:stoch-grad-exps}

\begin{figure*}[t]
\centering
\label{fig:4Dplots}
\includegraphics[width=16cm]{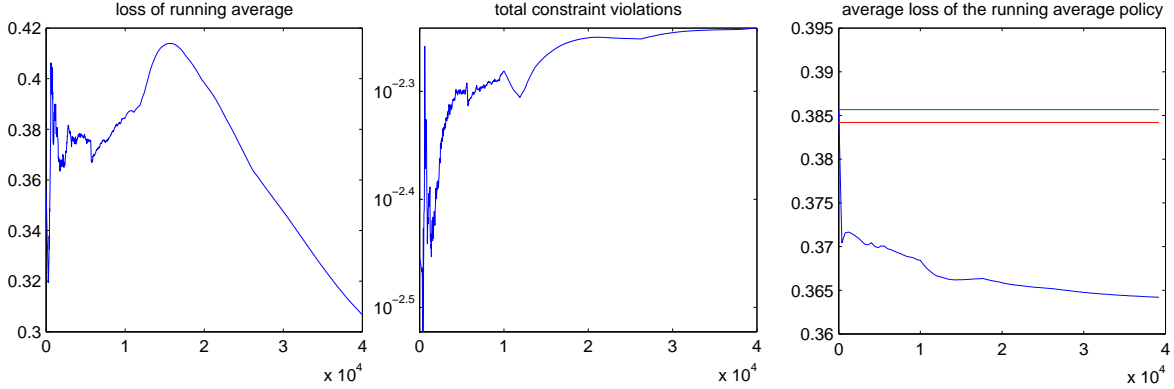}
\caption{ The left plot is of the linear objective of the running average, i.e. $\ell^\top \Phi\widehat\theta_t$. The center plot is the sum of the two constraint violations of $\widehat\theta_t$, and the right plot is $\ell^\top \tilde\mu_{\widehat\theta_t}$ (the average loss of the derived policy). The two horizontal lines correspond to the loss of the two heuristics, LONGER and LBFS.}
\end{figure*}

We ran our stochastic gradient descent algorithm with $I=1000$ sampled constraints and constraint gain $H=2$. Our learning rate began at $10^{-4}$ and halved every $2000$ iterations. The results of our algorithm are plotted in Figure \ref{fig:4Dplots}, where $\widehat\theta_t$ denotes the running average of $\theta_t$. The left plot is of the LP objective, $\ell^\top(\mu_0+\Phi\widehat\theta_t)$. The middle plot is of the sum of the constraint violations, $\norm{[\mu_0+\Phi\widehat\theta_t]_{-} }_1+ \norm{(P-B)^\top\Phi\widehat\theta_t }_1$. Thus, $c(\widehat\theta_t)$ is a scaled sum of the first two plots. Finally, the right plot is of the average losses, $\ell^\top\mu_{\widehat\theta_t}$ and the two horizontal lines correspond to the loss of the two heuristics, LONGER and LBFS. The right plot demonstrates that, as predicted by our theory, minimizing the surrogate loss $c(\theta)$ does lead to lower average losses. 

All previous algorithms (including \cite{DeFarias-VanRoy-2003}) work with value functions, while our algorithm works with stationary distributions. Due to this difference, we cannot use the same feature vectors to make a direct comparison. The solution that we find in this different approximating set is slightly worse than the solution of \citet{DeFarias-VanRoy-2003}.

\subsection{Constraint Sampling}
\label{sec:const-sampling-exps}

For the constraint sampling algorithm, we sampled the simplex constraints uniformly  with 10 different sample sizes: 508, 792, 1235, 1926, 3003, 4684, 7305, 11393, 17768, and 27712. Since $XA=4.1*10^6$, these sample sizes correspond to less that 1\%. The stationary constraints were sampled in the same proportion (i.e. $A$ times fewer samples). Let $a_1,\ldots,a_{AN}$ and $b_1,\ldots,b_N$ be the indices of the sampled simplex and stationary constraints, respectively. Explicitly, the sampled LP is:
\begin{align}
\label{eq:sampled_LP}
&\min_{\theta} (\Phi \theta)^\top  \ell\,, \\
\notag
&\mbox{s.t.}\quad (\Phi\theta)^\top  \mathbf 1 = 1,\, \Phi_{a_i,;} \theta \geq \mathbf 0,\, \forall i=1,\ldots,AN,\\
&\abs{\Phi\theta^\top (P - B)_{:,b_i}} \leq \mathbf \epsilon_s,\, \forall i=1,\ldots,N,\, \norm{\theta}_\infty\leq M
\end{align}
where $M$ and $\epsilon$ are necessary to ensure the LP always has a feasible  and bounded solution. This corresponds to setting $v_1=0$ and $v_2=\epsilon$. In particular, we used $M=3$ and $\epsilon=10^{-3}$. Using differenc values of $\epsilon$ did not have a large effect on the behavior of the algorithm.

For each sample size, we sample the constraints, solve the LP, then simulate the average loss of the policy. We repeated this procedure 35 times for each sample size and plotted the mean with error bars corresponding to the variance across each sample size in Figure \ref{fig:constraint_sampling}. Note the log scale on the x-axis. The best loss corresponds to 4684 sampled simplex constraints, or roughly 1\%, and is a marked improvement over the average loss found by the stochastic gradient descent method. However, changing the sample size by a factor of 4 in either direction is enough to obliterate this advantage. 
\begin{figure*}[h]
\centering
\label{fig:constraint_sampling}
\includegraphics[width=16cm]{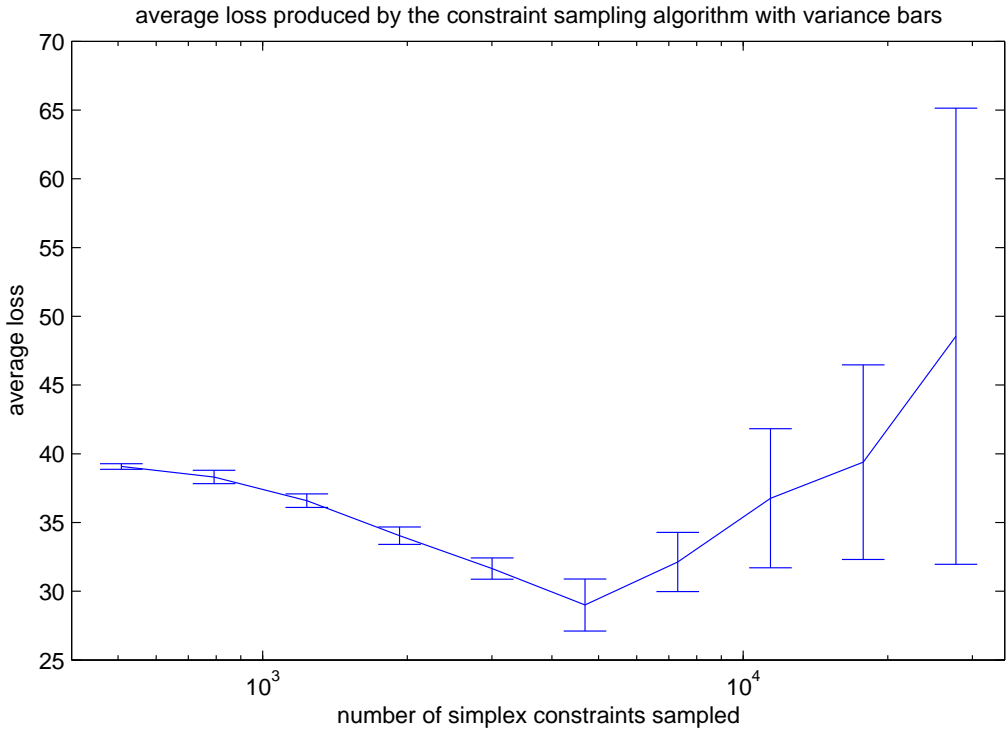}
\caption{Average loss with variance error bars of the constraint sampling algorithm run with a variety of sample sizes.}
\end{figure*}

First, we notice that the mean average loss is not monotonic. If we use too few constraints, then the sampled ALP does not reflect our original problem and we expect that the solution will make poor policies. On the other hand, if we sample too many constraints, then the LP is too restrictive and cannot adequately explore the feature space. To explain the increasing variance, recall that we have three families of constraints: the simplex constraints, the stationarity constraints, and the box constraints (i.e. $|\theta|_\infty\leq M$). Only the simplex and stationarity constraints are sampled. For the small sample sizes, the majority of the active constraints are the box constraint so $\tilde\theta$ (the minimizer of the LP) is not very sensitive to the random sample. However, as the sample size grows, so does the number of active simplex and stationarity constraints; hence, the random constraints affect  $\tilde\theta$ to a greater degree and the variance increases.

\section{Conclusions}
In this paper, we defined and solved the extended large-scale efficient ALP problem. We proved that, under certain assumptions about the dynamics, the stochastic subgradient method produces a policy with average loss competitive to all $\theta\in\Theta$, not just all $\theta$ producing a stationary distribution. We demonstrated this algorithm on the Rybko-Stoylar four-dimensional queueing network and recovered a policy better than two common heuristics and comparable to previous results on ALPs~\cite{DeFarias-VanRoy-2003}. %We also propose a constraint sampling method that has similar performance guarantees but under an additional assumption on the choice of features. 
A future direction is to find other interesting regularity conditions under which we can handle large-scale MDP problems. We also plan to conduct more experiments with challenging large-scale problems.

\section{Acknowledgements}

We gratefully acknowledge the support of the NSF through grant CCF-1115788 and of the ARC through an Australian Research Council Australian Laureate Fellowship (FL110100281).

\bibliography{all_bib}

\begin{thebibliography}{28}
\providecommand{\natexlab}[1]{#1}
\providecommand{\url}[1]{\texttt{#1}}
\expandafter\ifx\csname urlstyle\endcsname\relax
  \providecommand{\doi}[1]{doi: #1}\else
  \providecommand{\doi}{doi: \begingroup \urlstyle{rm}\Url}\fi

\bibitem[Abbasi-Yadkori(2012)]{Abbasi-Yadkori-2012}
Y.~Abbasi-Yadkori.
\newblock \emph{Online Learning for Linearly Parametrized Control Problems}.
\newblock PhD thesis, University of Alberta, 2012.

\bibitem[Bellman(1957)]{Bellman-1957}
R.~Bellman.
\newblock \emph{Dynamic Programming}.
\newblock Princeton University Press, 1957.

\bibitem[Bertsekas(2007)]{Bertsekas-2007}
D.~P. Bertsekas.
\newblock \emph{Dynamic Programming and Optimal Control}.
\newblock Athena Scientific, 2007.

\bibitem[Bertsekas and Tsitsiklis(1996)]{Bertsekas-Tsitsiklis-1996}
D.~P. Bertsekas and J.~Tsitsiklis.
\newblock \emph{Neuro-Dynamic Programming}.
\newblock Athena scientific optimization and computation series. Athena
  Scientific, 1996.

\bibitem[Calafiore and Campi(2005)]{Calafiore-Campi-2005}
G.~Calafiore and M.~C. Campi.
\newblock Uncertain convex programs: randomized solutions and confidence
  levels.
\newblock \emph{Mathematical Programming}, 102\penalty0 (1):\penalty0 25--46,
  2005.

\bibitem[Campi and Garatti(2008)]{Campi-Garatti-2008}
M.~C. Campi and S.~Garatti.
\newblock The exact feasibility of randomized solutions of uncertain convex
  programs.
\newblock \emph{SIAM Journal on Optimization}, 19\penalty0 (3):\penalty0
  1211--1230, 2008.

\bibitem[de~Farias and {V}an Roy(2003{\natexlab{a}})]{DeFarias-VanRoy-2003}
D.~P. de~Farias and B.~{V}an Roy.
\newblock The linear programming approach to approximate dynamic programming.
\newblock \emph{Operations Research}, 51, 2003{\natexlab{a}}.

\bibitem[de~Farias and {V}an
  Roy(2003{\natexlab{b}})]{deFarias-VanRoy-NIPS-2003}
D.~P. de~Farias and B.~{V}an Roy.
\newblock Approximate linear programming for average-cost dynamic programming.
\newblock In \emph{Advances in Neural Information Processing Systems (NIPS)},
  2003{\natexlab{b}}.

\bibitem[de~Farias and {V}an Roy(2004)]{DeFarias-VanRoy-2004}
D.~P. de~Farias and B.~{V}an Roy.
\newblock On constraint sampling in the linear programming approach to
  approximate dynamic programming.
\newblock \emph{Mathematics of Operations Research}, 29, 2004.

\bibitem[de~Farias and {V}an Roy(2006)]{DeFarias-VanRoy-2006}
D.~P. de~Farias and B.~{V}an Roy.
\newblock A cost-shaping linear program for average-cost approximate dynamic
  programming with performance guarantees.
\newblock \emph{Mathematics of Operations Research}, 31, 2006.

\bibitem[{de la Pe\~{n}a} et~al.(2009){de la Pe\~{n}a}, Lai, and
  Shao]{delaPena-Lai-Shao-2009}
V.~H. {de la Pe\~{n}a}, T.~L. Lai, and Q-M. Shao.
\newblock \emph{Self-normalized processes: Limit theory and Statistical
  Applications}.
\newblock Springer, 2009.

\bibitem[Desai et~al.(2012)Desai, Farias, and
  Moallemi]{Desai-Farias-Moallemi-2012}
V.~V. Desai, V.~F. Farias, and C.~C. Moallemi.
\newblock Approximate dynamic programming via a smoothed linear program.
\newblock \emph{Operations Research}, 60\penalty0 (3):\penalty0 655--674, 2012.

\bibitem[Flaxman et~al.(2005)Flaxman, Kalai, and
  McMahan]{Flaxman-Kalai-McMahan-2005}
A.~D. Flaxman, A.~T. Kalai, and H.~B. McMahan.
\newblock Online convex optimization in the bandit setting: gradient descent
  without a gradient.
\newblock In \emph{Proceedings of the sixteenth annual ACM-SIAM symposium on
  Discrete algorithms}, 2005.

\bibitem[Guestrin et~al.(2004)Guestrin, Hauskrecht, and
  Kveton]{Guestrin-Hauskrecht-Kveton-2004}
C.~Guestrin, M.~Hauskrecht, and B.~Kveton.
\newblock Solving factored mdps with continuous and discrete variables.
\newblock In \emph{Twentieth Conf. Uncertainty in Artificial Intelligence},
  2004.

\bibitem[Hauskrecht and Kveton(2003)]{Hauskrecht-Kveton-2003}
M.~Hauskrecht and B.~Kveton.
\newblock Linear program approximations to factored continuous-state markov
  decision processes.
\newblock In \emph{Advances in Neural Information Processing Systems}, 2003.

\bibitem[Howard(1960)]{Howard-1960}
R.~A. Howard.
\newblock \emph{Dynamic Programming and {M}arkov Processes}.
\newblock MIT, 1960.

\bibitem[Maei et~al.(2009)Maei, Szepesv\'{a}ri, Bhatnagar, Precup, Silver, and
  Sutton]{Maei-Szepesvari-Bhatnagar-Precup-Silver-Sutton-2009}
H.~R. Maei, Cs. Szepesv\'{a}ri, S.~Bhatnagar, D.~Precup, D.~Silver, and R.~S.
  Sutton.
\newblock Convergent temporal-difference learning with arbitrary smooth
  function approximation.
\newblock In \emph{Advances in Neural Information Processing Systems}, 2009.

\bibitem[Maei et~al.(2010)Maei, Szepesv\'{a}ri, Bhatnagar, and
  Sutton]{Maei-Szepesvari-Bhatnagar-Sutton-2010}
H.~R. Maei, Cs. Szepesv\'{a}ri, S.~Bhatnagar, and R.~S. Sutton.
\newblock Toward off-policy learning control with function approximation.
\newblock In \emph{Proceedings of the 27th International Conference on Machine
  Learning}, 2010.

\bibitem[Manne(1960)]{Manne-1960}
A.~S. Manne.
\newblock Linear programming and sequential decisions.
\newblock \emph{Management Science}, 6\penalty0 (3):\penalty0 259--267, 1960.

\bibitem[Petrik and Zilberstein(2009)]{Petrik-Zilberstein-2009}
M.~Petrik and S.~Zilberstein.
\newblock Constraint relaxation in approximate linear programs.
\newblock In \emph{Proc. 26th Internat. Conf. Machine Learning (ICML)}, 2009.

\bibitem[Rybko and Stolyar(1992)]{Rybko-Stolyar-1992}
A.~N. Rybko and A.~L. Stolyar.
\newblock Ergodicity of stochastic processes describing the operation of open
  queueing networks.
\newblock \emph{Problemy Peredachi Informatsii}, 28\penalty0 (3):\penalty0
  3--26, 1992.

\bibitem[Schweitzer and Seidmann(1985)]{Schweitzer-Seidmann-1985}
P.~Schweitzer and A.~Seidmann.
\newblock Generalized polynomial approximations in {M}arkovian decision
  processes.
\newblock \emph{Journal of Mathematical Analysis and Applications},
  110:\penalty0 568--582, 1985.

\bibitem[Sutton and Barto(1998)]{Sutton-Barto-1998}
R.~S. Sutton and A.~G. Barto.
\newblock \emph{Reinforcement Learning: An Introduction}.
\newblock Bradford Book. MIT Press, 1998.

\bibitem[Sutton et~al.(2009{\natexlab{a}})Sutton, Maei, Precup, Bhatnagar,
  Silver, Szepesv\'{a}ri, and
  Wiewiora]{Sutton-Maei-Precup-Bhatnagar-Silver-Szepesvari-Wiewiora-2009}
R.~S. Sutton, H.~R. Maei, D.~Precup, S.~Bhatnagar, D.~Silver, Cs.
  Szepesv\'{a}ri, and E.~Wiewiora.
\newblock Fast gradient-descent methods for temporal-difference learning with
  linear function approximation.
\newblock In \emph{Proceedings of the 26th International Conference on Machine
  Learning}, 2009{\natexlab{a}}.

\bibitem[Sutton et~al.(2009{\natexlab{b}})Sutton, Szepesv\'{a}ri, and
  Maei]{Sutton-Szepesvari-Maei-2009}
R.~S. Sutton, Cs. Szepesv\'{a}ri, and H.~R. Maei.
\newblock A convergent {O}(n) algorithm for off-policy temporal-difference
  learning with linear function approximation.
\newblock In \emph{Advances in Neural Information Processing Systems},
  2009{\natexlab{b}}.

\bibitem[Veatch(2013)]{Veatch-2013}
M.~H. Veatch.
\newblock Approximate linear programming for average cost mdps.
\newblock \emph{Mathematics of Operations Research}, 38\penalty0 (3), 2013.

\bibitem[Wang et~al.(2008)Wang, Lizotte, Bowling, and
  Schuurmans]{Wang-Lizotte-Bowling-Schuurmans-2008}
T.~Wang, D.~Lizotte, M.~Bowling, and D.~Schuurmans.
\newblock Dual representations for dynamic programming.
\newblock \emph{Journal of Machine Learning Research}, pages 1--29, 2008.

\bibitem[Zinkevich(2003)]{Zinkevich-2003}
M.~Zinkevich.
\newblock Online convex programming and generalized infinitesimal gradient
  ascent.
\newblock In \emph{ICML}, 2003.

\end{thebibliography}

\newpage
\onecolumn
\appendix

\end{document}